\documentclass[12pt,twoside]{amsart}
\usepackage{amsmath, amsthm, amscd, amsfonts, amssymb, graphicx}
\usepackage{enumerate}
\usepackage[colorlinks=true,
linkcolor=blue,
urlcolor=cyan,
citecolor=red]{hyperref}
\usepackage{mathrsfs}
\addtolength{\topmargin}{-1.5cm}
\linespread {1.3}
\textwidth 17cm
\textheight 23cm
\addtolength{\hoffset}{-0.3cm}
\oddsidemargin 0cm
\evensidemargin 0cm
\setcounter{page}{1}
\newtheorem{theorem}{Theorem}[section]
\newtheorem{lemma}{Lemma}[section]
\newtheorem{remark}{Remark}[section]
\newtheorem{definition}{Definition}[section]
\newtheorem{corollary}{Corollary}[section]
\newtheorem{example}{Example}[section]
\newtheorem{proposition}{Proposition}[section]
\numberwithin{equation}{section}

\begin{document}
	
\title{A new treatment of convex functions}
\author{Mohammad Sababheh, Shigeru Furuichi, and Hamid Reza Moradi}
\subjclass[2010]{Primary 26A51, Secondary 47A30, 39B62, 26D07, 47B15, 15A60.}
\keywords{Convex functions, Jensen's inequality, norm inequality, weak majorization.}
\maketitle

\begin{abstract}
Convex functions have played a major role in the field of Mathematical inequalities. In this paper, we introduce a new concept related to convexity, which proves better estimates when the function is somehow more convex than another.\\
In particular, we define what we called $g-$convexity as a generalization of $\log-$convexity. Then we prove that $g-$convex functions have better estimates in  certain known inequalities like the Hermite-Hadard inequality, super additivity of convex functions, the Majorization inequality and some means inequalities.\\
Strongly related to this, we define the index of convexity as a measure of ``how much the function is convex".\\
Applications including Hilbert space operators, matrices and entropies will be presented in the end.
\end{abstract}
\pagestyle{myheadings}
\markboth{\centerline {M. Sababheh, S. Furuichi \& H. R. Moradi}}
{\centerline {A New Treatment of Convex Functions}}
\bigskip
\bigskip

\section{Introduction}
A function $f:[a,b]\to\mathbb{R}$ is said to be convex if
$$f(w_1x_1+w_2x_2)\leq w_1f(x_1)+w_2f(x_2),$$ for all $x_1,x_2\in [a,b]$ and positive numbers $w_1,w_2$ satisfying $w_1+w_2=1.$ This is generalized by the so called Jensen's inequality in the form
\begin{equation}\label{jensen_intro}
f\left(\sum_{i=1}^{n}w_ix_i\right)\leq \sum_{i=1}^{n}w_if(x_i), n\in\mathbb{N},
\end{equation}
for $x_i\in [a,b]$ and $w_i>0$ with $\sum_{i=1}^{n}w_i=1.$

Convex functions have received a considerable attention in the literature due to their applications in many scientific fields, such as Mathematical inequalities, Mathematical analysis and Mathematical physics.\\
It can be seen that all known properties of convex functions follow from \eqref{jensen_intro}. Very recently, a new characterization of convex functions was given in \cite{sab_bull}, where nonlinear upper bounds of convex functions were found. In this context, we recall that the geometric meaning of a convex function is that the function is bounded above by its linear secants.

However, neither the original definition nor the Jensen inequality differentiates between two convex functions. In other words, when $f_1$ and $f_2$ are convex functions, all what the definition says is that
$$f_i(w_1x_1+w_2x_2)\leq w_1f_i(x_1)+w_2f_i(x_2).$$ This does not reflect any of the many other properties of $f_i$. For example, if $f_1(x)=x^2$ and $f_2(x)=x^4$, then both functions are convex. Hence,
$$(w_1x_1+w_2x_2)^2\leq w_1x_1^2+w_2x^2\;{\text{and}}\;(w_1x_1+w_2x_2)^4\leq w_1x_1^4+w_2x^4.$$

The main goal of this article is somehow to look into ``how much the convex function is convex?" For example, according to our argument, we will see that $f(x)=x^4$ is ``more convex" than $f(x)=x^2,$ and then to see that $f(x)=e^x$ is more convex than polynomials!\\
The idea we present is a simple idea, where we make a concave function operates on the convex function, then to see the result. For example, the function $f(x)=x^2, x>0$ is convex. It is somehow about ``how much power do we need to exert to stop convexity of $f$?" In this case, we know that $\sqrt{f(x)}=x.$ The function $x$ being the ``least" convex function, we see that we needed a power of $\frac{1}{2}$ to stop convexity of $f(x)=x^2$, somehow.

Our main target is to formalize the above paragraph! We will see that our approach generalizes the well known and useful notion of $\log-$convexity, where a function $f$ is called $\log-$convex if the function $\log f$ is convex. It is well known that $\log-$convex functions satisfy better bounds than convex functions. We notice here that the function $g(x)=\log x$ is a concave function that acted on $f$. Having $\log f$ convex made $\log-$convex functions satisfy better results than convex functions. 

Our main definition reads as follows.

\begin{definition}
Let $f:J_1\to J_2$ be a continuous  function on the interval $J_1$ and let $g:J_2\to J_3$ be increasing and concave (resp., convex) on $J_2$, such that $g\circ f:J_1\to J_3$  is convex (resp., concave). Then, $f$ is said to be $g-$convex (resp., $g-$concave).
\end{definition}

We observe that, in this definition, we do not impose the condition that $f$ is convex. However, this follows immediately because 
$$f=g^{-1}\left(g\circ f\right);$$ which is convex since $g\circ f$ is convex and $g^{-1}$ is convex and increasing.\\
We will show that $g-$convex functions satisfy better bounds than convex functions. However, the significance here is that we treat convex functions as $g-$convex functions, for certain $g$. Once this idea is established, we show Jensen-type  and Hermite-Hadamard inequalities, as refinements of the well known inequalities.

As a special case, we will take the power functions $g(x)=x^{\frac{1}{r}}, r\geq 1,$ to introduce the new notion of ``the index of a convex function". This new convexity index aims to present a number that, somehow, measures convexity of $f$. As a consequence of this index, we will be able to present a new property of convex functions. Namely, we will show that a positive convex function $f$ satisfies 
$$(f')^2\leq ff''\; {\text{if and only if the index of convexity of}}\;f\;{\text{is}}\;\infty,$$
as a new property of convex functions relating $f,f'$ and $f''.$

Then we present some applications for Hilbert space operators and entropies. These applications include better majorization bounds, better bounds in the operator-convex super additivity results and the Jensen inner product inequality.

\section{treatment of convex inequalities}
In this section, we present some applications of $g-$convex functions in the context of the Jensen inequality, the Hermite-Hadamard inequality and some applications to mean inequalities. Also, super additivity of convex functions will be visited.

\begin{proposition}\label{7}
Let $f$ be a $g$-convex function on the interval $J$. Then $f$ is convex and 
\begin{equation*}
f\left( \sum\limits_{i=1}^{n}{{{w}_{i}}{{x}_{i}}} \right)\le {{g}^{-1}}\left( \sum\limits_{i=1}^{n}{{{w}_{i}}\left( g\circ f \right)\left( {{x}_{i}} \right)} \right)\le \sum\limits_{i=1}^{n}{{{w}_{i}}f\left( {{x}_{i}} \right)}
\end{equation*}
for any ${{x}_{1}},\ldots ,{{x}_{n}}\in J$ and $0\le {{w}_{1}},\ldots ,{{w}_{n}}\le 1$ with $\sum\nolimits_{i=1}^{n}{{{w}_{i}}}=1$.
\end{proposition}
\begin{proof}
Since $g\circ f$ is a convex function, we have for any ${{x}_{1}},\ldots ,{{x}_{n}}\in J$ and $0\le {{w}_{1}},\ldots ,{{w}_{n}}\le 1$ with $\sum\nolimits_{i=1}^{n}{{{w}_{i}}}=1$,
\[\left( g\circ f \right)\left( \sum\limits_{i=1}^{n}{{{w}_{i}}{{x}_{i}}} \right)\le \sum\limits_{i=1}^{n}{{{w}_{i}}\left( g\circ f \right)\left( {{x}_{i}} \right)}.\]
Since $g^{-1}$ is increasing and convex, we then have
\[\begin{aligned}
   f\left( \sum\limits_{i=1}^{n}{{{w}_{i}}{{x}_{i}}} \right)&={{g}^{-1}}\left( \left( g\circ f \right)\left( \sum\limits_{i=1}^{n}{{{w}_{i}}{{x}_{i}}} \right) \right) \\ 
 & \le {{g}^{-1}}\left( \sum\limits_{i=1}^{n}{{{w}_{i}}\left( g\circ f \right)\left( {{x}_{i}} \right)} \right) \\ 
 & \le \sum\limits_{i=1}^{n}{{{w}_{i}}{{g}^{-1}}\left( \left( g\circ f \right)\left( {{x}_{i}} \right) \right)} \\ 
 & =\sum\limits_{i=1}^{n}{{{w}_{i}}f\left( {{x}_{i}} \right)}.  
\end{aligned}\]
This, in particular, shows that $f$ is convex. Consequently,
\begin{equation}\label{1}
f\left( \sum\limits_{i=1}^{n}{{{w}_{i}}{{x}_{i}}} \right)\le {{g}^{-1}}\left( \sum\limits_{i=1}^{n}{{{w}_{i}}\left( g\circ f \right)\left( {{x}_{i}} \right)} \right)\le \sum\limits_{i=1}^{n}{{{w}_{i}}f\left( {{x}_{i}} \right)}.
\end{equation}
\end{proof}

Clarify $g-$convexity, we present some examples.
\begin{example}
\hfill
\begin{itemize}
\item[(i)] If we take $f(x):=\exp(x)$ and $g(x):=\log x$, $(x>0)$, then $h(x):=(g\circ f)(x)=x$ and we have
$$
\exp\left(\sum_{i=1}^nw_ix_i\right)\leq \sum_{i=1}^n w_i\exp(x_i).
$$
The inequality is just Jensen's inequality. If we take $\log$ for this inequality, we get
$$
\sum_{i=1}^n w_ix_i \leq \log\left(\sum_{i=1}^nw_i\exp(x_i)\right).
$$

\item[(ii)] If we take $f(x):=-\log x$, $(0<x \leq 1)$ and $g(x):=x^p$, $(x>0,\,\,0\leq p \leq 1)$, then $h(x)=(-\log x)^p$ and we have
$$
-\log\left(\sum_{i=1}^n w_ix_i\right)\leq \left(\sum_{i=1}^n w_i(-\log x_i)^p\right)^{1/p}\leq -\sum_{i=1}^n w_i\log x_i,
$$
which implies
$$
\log \prod_{i=1}^n x_i^{w_i}\leq \log \exp\left(-\sum_{i=1}^nw_i(-\log x_i)^p\right)^{1/p}\leq \log \left(\sum_{i=1}^n w_ix_i \right).
$$
If we take $p=1$, then we get
\begin{equation*}
 \prod_{i=1}^n x_i^{w_i}\le  \sum_{i=1}^n w_ix_i.  
\end{equation*}
\item[(iii)] If we take $f(x):=\exp(x)$ and $g(x):=x^p$, $(0\leq p \leq 1)$, then $(g\circ f)(x)=\exp(px)$, and we have
$$
\exp\left(\sum_{i=1}^n w_ix_i\right) \leq \left(\sum_{i=1}^n w_i\exp(p x_i)\right)^{1/p}\leq \sum_{i=1}^n w_i\exp(x_i),
$$
which improves the inequality given in (i).
\item[(iv)] If we take $f(x):=x^p$, $(x>0,\,\,p\leq 0)$ and $g(x):=\log x$, $(x>0)$, then $(g\circ f)(x)=p\log x$, and we have
$$
\left(\sum_{i=1}^nw_ix_i\right)^p\le \prod_{i=1}^{n}x_i^{pw_i}\leq \sum_{i=1}^n w_ix_i^p.
$$
\end{itemize}
\end{example}

We make some space in the following example for the celebrated Young's inequality. Recall that if $a,b>0$ and $0\leq t\leq 1,$ then Young's inequality states that
\begin{equation}\label{young_ineq}
a^{1-t}b^{t}\leq (1-t)a+tb.
\end{equation}
This inequality has attracted numerous researchers due to its applications in operator theory and functional analysis, in general.
In the following, we present refinements of this inequality using our idea about $g-$convexity.
\begin{proposition}
Let $a,b>0$ and $0\leq t\leq 1.$
\begin{itemize}
\item If $0\leq p\leq 1$, then \eqref{young_ineq} can be refined as
\begin{equation}\label{ref_young_1}
a^{1-t}b^{t}\leq \left\{(1-t)a^p+tb^p\right\}^{\frac{1}{p}}\leq (1-t)a+tb.
\end{equation}
\item We also have for $0\leq p\leq 1$,
\begin{equation}\label{Heinz_1}
\sqrt{ab} \leq H^{1/p}_t(a^p,b^p) \leq H_t(a,b),
\end{equation}
where $H_t(a,b):=\dfrac{a^{1-t}b^t+a^{t}b^{1-t}}{2}$ is the Heinz mean.
\end{itemize}
\end{proposition}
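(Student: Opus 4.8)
The plan is to obtain both chains of inequalities as direct specializations of the refined Jensen inequality from Example (iii), namely the fact that $f(x)=\exp(x)$ is $g$-convex for $g(x)=x^{p}$ with $0\le p\le 1$, so that by Proposition \ref{7},
$$\exp\left(\sum_{i=1}^{n}w_ix_i\right)\le\left(\sum_{i=1}^{n}w_i\exp(px_i)\right)^{1/p}\le\sum_{i=1}^{n}w_i\exp(x_i).$$
Everything then reduces to choosing $n=2$ together with the right points and weights, and reading off the three terms.

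For \eqref{ref_young_1} I would take the weights $w_1=1-t,\ w_2=t$ and the points $x_1=\log a,\ x_2=\log b$. The left term becomes $\exp((1-t)\log a+t\log b)=a^{1-t}b^{t}$; the middle term becomes $\left((1-t)a^{p}+tb^{p}\right)^{1/p}$ since $\exp(p\log a)=a^{p}$ and $\exp(p\log b)=b^{p}$; and the right term becomes $(1-t)a+tb$. This is precisely \eqref{ref_young_1}.

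For the Heinz inequality \eqref{Heinz_1}, the key observation is that the Heinz mean is an average of two weighted geometric means whose logarithms share the common midpoint $\tfrac12\log(ab)$. Accordingly I would apply the same refined inequality with equal weights $w_1=w_2=\tfrac12$ and the symmetric points $x_1=(1-t)\log a+t\log b,\ x_2=t\log a+(1-t)\log b$. Then $\exp(x_1)=a^{1-t}b^{t}$ and $\exp(x_2)=a^{t}b^{1-t}$, so the right term is $H_t(a,b)$; likewise $\exp(px_1)=(a^{p})^{1-t}(b^{p})^{t}$ and $\exp(px_2)=(a^{p})^{t}(b^{p})^{1-t}$, so the middle term is $H_t(a^{p},b^{p})^{1/p}$; finally $\tfrac{x_1+x_2}{2}=\tfrac12\log(ab)$ gives $\exp\!\left(\tfrac{x_1+x_2}{2}\right)=\sqrt{ab}$ for the left term, yielding \eqref{Heinz_1}.

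There is no serious analytic obstacle here, since the underlying inequality has already been established; the only genuine step is the choice of substitution, and in particular spotting in the Heinz case that the two exponents must be taken as the symmetric pair above so that their average collapses to $\tfrac12\log(ab)$. Once that symmetric choice is made, all three quantities match the stated expressions after routine simplification.
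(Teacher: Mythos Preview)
Your proof is correct and follows essentially the same approach as the paper: both apply Proposition~\ref{7} with $g(x)=x^{p}$ to obtain the refined Jensen inequality, then specialize to two points with the appropriate weights. The only cosmetic difference is the choice of the $g$-convex function $f$: the paper takes $f(s)=a^{1-s}b^{s}$ and evaluates at $s=0,1$ (respectively $s=t,1-t$), whereas you take $f(x)=e^{x}$ and evaluate at $x=\log a,\log b$ (respectively their symmetric convex combinations); since $a^{1-s}b^{s}=\exp((1-s)\log a+s\log b)$ these differ only by an affine change of variable and are mathematically identical.
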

\begin{proof}
Let $f(t)=a^{1-t}b^{t}$ and $g(t)=t^{p}, (0\leq p\leq 1).$ Then, $g$ is increasing concave and $g\circ f = a^{p(1-t)}b^{pt}$ is convex, since we have $(g\circ f)''(t) = a^{p(1-t)}b^{pt}p^2\left(\log a -\log b\right)^2\geq 0$. Applying Proposition \ref{7}, with $n=2$,$w_1=t,x_1=1,w_2=1-t$ and $x_2=0$ implies \eqref{ref_young_1}.

In the similar setting such as  $f(t)=a^{1-t}b^{t}$ and $g(t)=t^{p}, (0\leq p\leq 1)$ with $n=2$, $w_1=w_2=\dfrac{1}{2}$, $x_1=t$ and $x_2=1-t$ in  Proposition \ref{7}, we have \eqref{Heinz_1}.
\end{proof}
Notice that the inequality \eqref{ref_young_1} is the well known power mean inequality. Thus, we have obtained this celebrated inequality as a special case of our general argument. We note that
$
\lim\limits_{p\to 0}H^{1/p}_t(a^p,b^p) =\sqrt{ab}
$
and $H^{1/p}_t(a^p,b^p)=H_t(a,b)$ when $p=1$.
\begin{remark}
In the process, we used (i) the convexity of  $g \circ f$, and (ii) the convexity of $g^{-1}$ which is equivalent to the concavity of $g$. Note that we do not impose the condition on $f$ itself.

In addition, we can obtain the following inequalities:
$$
(g\circ f)((1-v)a+vb)\leq (1-v)(g\circ f)(a)+v (g\circ f)(b)\leq g((1-v)f(a)+vf(b))
$$
for convex $g\circ f$ and concave $g$. Also we have
$$
(g\circ f)((1-v)a+vb)\geq (1-v)(g\circ f)(a)+v (g\circ f)(b)\geq g((1-v)f(a)+vf(b))
$$
for concave $g\circ f$ and convex $g$.  
\end{remark}

On the other hand, $g-$convex functions satisfy better super additivity inequalities. Recall that a convex function $f:[0,a]\to\mathbb{R}$ with $f(0)\leq 0,$ satisfies $$f(x)+f(y)\leq f(x)+f(y), x,y\in [0,a].$$ The following result presents a better bound for $g-$convex functions.

\begin{proposition}
Let $f$ be a $g$-convex function on the interval $J:=[0,a], a>0,$ with $\left( g\circ f \right)\left( 0 \right)\le 0$ and $g(0)\ge 0$. Then
\begin{equation*}
f\left( x \right)+f\left( y \right)\le {{g}^{-1}}\left( \left( g\circ f \right)\left( x \right)+\left( g\circ f \right)\left( y \right) \right)\le f\left( x+y \right),
\end{equation*}
for any $x,y\in J$.
\end{proposition}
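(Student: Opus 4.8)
The plan is to reduce the two-sided inequality to two scalar facts and then exploit that $g^{-1}$ is increasing. Writing $S:=(g\circ f)(x)+(g\circ f)(y)$, the claim is $f(x)+f(y)\le g^{-1}(S)\le f(x+y)$. Since $g^{-1}$ is increasing and $f=g^{-1}\circ(g\circ f)$, the right-hand half will follow from the \emph{super-additivity} $S\le(g\circ f)(x+y)$ after applying $g^{-1}$, while the left-hand half will follow from the \emph{subadditivity} $g(f(x)+f(y))\le S$, again after applying $g^{-1}$ and using $g^{-1}(g(t))=t$.

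For the right-hand inequality I would invoke the classical super-additivity of convex functions for the convex function $h:=g\circ f$, which satisfies $h(0)=(g\circ f)(0)\le 0$ by hypothesis. Decomposing $x=\frac{x}{x+y}(x+y)+\frac{y}{x+y}\cdot 0$ and using convexity together with $h(0)\le 0$ gives $h(x)\le\frac{x}{x+y}h(x+y)$, and symmetrically $h(y)\le\frac{y}{x+y}h(x+y)$; adding these yields $S=(g\circ f)(x)+(g\circ f)(y)\le(g\circ f)(x+y)$. Applying the increasing map $g^{-1}$ and recalling $g^{-1}\big((g\circ f)(x+y)\big)=f(x+y)$ produces the desired bound $g^{-1}(S)\le f(x+y)$.

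For the left-hand inequality I would use that a concave $g$ with $g(0)\ge 0$ is subadditive. Setting $u=f(x)$ and $v=f(y)$, the two concavity estimates $g(u)\ge\frac{u}{u+v}g(u+v)+\frac{v}{u+v}g(0)$ and $g(v)\ge\frac{v}{u+v}g(u+v)+\frac{u}{u+v}g(0)$ add to $g(u)+g(v)\ge g(u+v)+g(0)\ge g(u+v)$, that is, $g(f(x)+f(y))\le(g\circ f)(x)+(g\circ f)(y)=S$. Applying the increasing $g^{-1}$ then gives $f(x)+f(y)\le g^{-1}(S)$, completing the sandwich.

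The step I expect to be delicate is the subadditivity argument for $g$, since the convex-combination identities above require $f(x),f(y)\ge 0$ (so that each point lies between $0$ and $u+v$); the hypothesis $g(0)\ge 0$ then supplies the crucial sign that upgrades concavity to subadditivity, exactly mirroring how $(g\circ f)(0)\le 0$ drives the super-additivity half. In the intended applications the inner function $f$ is nonnegative with $f(0)=0$, so this requirement is met; I would state the nonnegativity of $f$ (equivalently, that the relevant arguments of $g$ are nonnegative) explicitly to make the subadditivity step rigorous.
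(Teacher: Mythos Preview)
Your proof is correct and follows essentially the same route as the paper. For the right-hand inequality both you and the paper invoke super-additivity of the convex function $h=g\circ f$ (using $h(0)\le 0$) and then apply the increasing $g^{-1}$. For the left-hand inequality the paper argues via super-additivity of the convex $g^{-1}$ (noting $g(0)\ge 0$ forces $g^{-1}(0)\le 0$), while you use the dual formulation, subadditivity of the concave $g$; applying $g^{-1}$ to your inequality $g(f(x)+f(y))\le S$ is exactly the paper's $g^{-1}(h(x))+g^{-1}(h(y))\le g^{-1}(S)$, so the two arguments are the same inequality read from opposite sides. Your caveat about needing $f(x),f(y)\ge 0$ for the subadditivity step is well taken; the paper's phrasing has the analogous hidden requirement $(g\circ f)(x),(g\circ f)(y)\ge 0$ for the super-additivity of $g^{-1}$, which it does not mention, so your version is in fact the more careful of the two.
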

\begin{proof}
Since $h=g\circ f$ is a convex function with $\left( g\circ f \right)\left( 0 \right)\le 0$, we have for any $x,y\in J$,
 \[\left( g\circ f \right)\left( x \right)+\left( g\circ f \right)\left( y \right)\le \left( g\circ f \right)\left( x+y \right).\]
Since $g^{-1}$ is increasing and convex with $g\left( 0 \right)\geq 0$, we have $g^{-1}(0) \le 0$ and then have
\[\begin{aligned}
   f\left( x \right)+f\left( y \right)&={{g}^{-1}}\left( \left( g\circ f \right)\left( x \right) \right)+{{g}^{-1}}\left( \left( g\circ f \right)\left( y \right) \right) \\ 
 & \le {{g}^{-1}}\left( \left( g\circ f \right)\left( x \right)+\left( g\circ f \right)\left( y \right) \right) \\ 
 & \le {{g}^{-1}}\left( \left( g\circ f \right)\left( x+y \right) \right) \\ 
 & =f\left( x+y \right).  
\end{aligned}\]
This completes the proof of the proposition. 
\end{proof}

Our next target is improving the Hermite-Hadamard inequality for $g-$convex functions. We observe that $g-$convex functions satisfy better bounds in the Hermite-Hadamrd inequality than mere convex functions.

\begin{theorem}
Let $f$ be a $g-$ convex function on the interval $J$.  Then for $a<b$ in $J$,
\[\begin{aligned}
   f\left( \frac{a+b}{2} \right)&\le \frac{1}{b-a}\int_{a}^{b}{f\left( z \right)dz} \\ 
 & \le \int_{a}^{b}g^{-1}\left( \frac{z-a}{b-a}h\left( a \right)+\frac{b-z}{b-a}h\left( b \right) \right)dz \\ 
 & \le \frac{f\left( a \right)+f\left( b \right)}{2},  
\end{aligned}\]
where $h=g\circ f.$
\end{theorem}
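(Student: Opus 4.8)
The plan is to establish the three-fold inequality by reading off each of its three links from a separate convexity property: the leftmost from the convexity of $f$ (which holds by Proposition \ref{7}), the middle from the convexity of $h=g\circ f$ together with the monotonicity of $g^{-1}$, and the rightmost from the convexity of $g^{-1}$. Throughout, the middle expression should be read as the average $\frac{1}{b-a}\int_a^b g^{-1}(\cdots)\,dz$, and I would keep in mind the reflection symmetry $z\mapsto a+b-z$ of the interval $[a,b]$, which interchanges the weights $\frac{z-a}{b-a}$ and $\frac{b-z}{b-a}$ under the integral without changing its value.

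First I would dispose of the leftmost inequality, which is just the left Hermite--Hadamard bound for the convex function $f$. For each $z\in[a,b]$, the point $\frac{a+b}{2}$ is the midpoint of $z$ and $a+b-z$, so convexity of $f$ gives $f\left(\frac{a+b}{2}\right)\le \frac12\left(f(z)+f(a+b-z)\right)$. Integrating in $z$ over $[a,b]$ and using $\int_a^b f(a+b-z)\,dz=\int_a^b f(z)\,dz$ yields $f\left(\frac{a+b}{2}\right)\le \frac{1}{b-a}\int_a^b f(z)\,dz$.

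For the middle inequality I would argue pointwise. Writing $z=(1-t)a+tb$ with $t=\frac{z-a}{b-a}$, convexity of $h$ gives $h(z)\le \frac{b-z}{b-a}h(a)+\frac{z-a}{b-a}h(b)$. Since $f=g^{-1}\circ h$ and $g^{-1}$ is increasing, applying $g^{-1}$ preserves the inequality and produces $f(z)\le g^{-1}\left(\frac{b-z}{b-a}h(a)+\frac{z-a}{b-a}h(b)\right)$. Integrating over $[a,b]$, dividing by $b-a$, and then invoking the substitution $z\mapsto a+b-z$ to recast the right-hand integrand with the weights exactly as displayed in the statement, gives the middle link.

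Finally, the rightmost inequality uses convexity of $g^{-1}$: for each $z$ one has $g^{-1}\left(\frac{z-a}{b-a}h(a)+\frac{b-z}{b-a}h(b)\right)\le \frac{z-a}{b-a}g^{-1}(h(a))+\frac{b-z}{b-a}g^{-1}(h(b))=\frac{z-a}{b-a}f(a)+\frac{b-z}{b-a}f(b)$. Averaging over $[a,b]$ and using $\frac{1}{b-a}\int_a^b \frac{z-a}{b-a}\,dz=\frac{1}{b-a}\int_a^b \frac{b-z}{b-a}\,dz=\frac12$ delivers exactly $\frac{f(a)+f(b)}{2}$. I expect no genuine analytic obstacle here; the only real care needed is bookkeeping --- retaining the normalizing factor $\frac{1}{b-a}$ on the middle average and tracking the orientation of the linear weights, a discrepancy that the reflection $z\mapsto a+b-z$ resolves cleanly.
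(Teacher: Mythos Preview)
Your proof is correct and follows essentially the same route as the paper: both arguments rest on the pointwise two-sided bound $f(z)\le g^{-1}\!\left(\frac{b-z}{b-a}h(a)+\frac{z-a}{b-a}h(b)\right)\le \frac{b-z}{b-a}f(a)+\frac{z-a}{b-a}f(b)$ coming from Proposition~\ref{7}, the midpoint estimate $f\!\left(\frac{a+b}{2}\right)\le \frac{1}{2}\big(f(z)+f(a+b-z)\big)$, the reflection $z\mapsto a+b-z$, and a single integration over $[a,b]$. The only cosmetic difference is that the paper chains all four pointwise bounds together before integrating, whereas you treat the three links separately; your explicit remark that the middle term must carry the factor $\frac{1}{b-a}$ is well taken, as the paper's own integration step confirms.
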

\begin{proof}
On account of Proposition \ref{7}, it follows that
\begin{equation}\label{2}
f\left( \left( 1-v \right)x+vy \right)\le g^{-1}\left( \left( 1-v \right)(g\circ f)\left( x \right)+v(g\circ f)\left( y \right) \right)\le \left( 1-v \right)f\left( x \right)+vf\left( y \right).
\end{equation}

Now, suppose  $z\in \left[ a,b \right]$. If we substitute $x=a$, $y=b$, and $1-v={\left( b-z \right)}/{\left( b-a \right)}\;$ in \eqref{2}, we get
\begin{equation}\label{3}
\begin{aligned}
   f\left( z \right)&\le g^{-1}\left( \frac{b-z}{b-a}h\left( a \right)+\frac{x-a}{b-a}h\left( b \right) \right) \\ 
 & \le \frac{b-z}{b-a}f\left( a \right)+\frac{z-a}{b-a}f\left( b \right).  
\end{aligned}
\end{equation}
Since $z\in \left[ a,b \right]$, it follows that $b+a-z\in \left[ a,b \right]$. Now, applying the inequality \eqref{3} to the variable $b+a-z$, we get
\begin{equation}\label{4}
\begin{aligned}
   f\left( b+a-z \right)&\le g^{-1}\left( \frac{z-a}{b-a}h\left( a \right)+\frac{b-z}{b-a}h\left( b \right) \right) \\ 
 & \le \frac{z-a}{b-a}f\left( a \right)+\frac{b-z}{b-a}f\left( b \right).  
\end{aligned}
\end{equation}
By adding inequalities \eqref{3} and \eqref{4}, we infer that
\[\begin{aligned}
  & f\left( b+a-z \right)+f\left( z \right) \\ 
 & \le g^{-1}\left( \frac{z-a}{b-a}h\left( a \right)+\frac{b-z}{b-a}h\left( b \right) \right)+\left( \frac{b-z}{b-a}h\left( a \right)+\frac{z-a}{b-a}h\left( b \right) \right) \\ 
 & \le \frac{z-a}{b-a}f\left( a \right)+\frac{b-z}{b-a}f\left( b \right)+\frac{b-z}{b-a}f\left( a \right)+\frac{z-a}{b-a}f\left( b \right) \\ 
 & =f\left( b \right)+f\left( a \right)  
\end{aligned}\]
which, in turn, leads to
\begin{equation}\label{5}
\begin{aligned}
   f\left( \frac{a+b}{2} \right)&=f\left(\frac{a+b-z+z}{2}\right) \\ 
 & \le \frac{f\left( a+b-z \right)+f\left( z \right)}{2} \\ 
 & \le \frac{1}{2}\left(g^{-1}\left( \frac{z-a}{b-a}{h}\left( a \right)+\frac{b-z}{b-a}{h}\left( b \right) \right)+g^{-1}\left( \frac{b-z}{b-a}{h}\left( a \right)+\frac{z-a}{b-a}{h}\left( b \right) \right) \right) \\ 
 & \le \frac{f\left( a \right)+f\left( b \right)}{2}.  
\end{aligned}
\end{equation}
Now, the result follows by integrating the inequality \eqref{5} over $z\in \left[ a,b \right]$, and using the fact that $\int_{a}^{b}{f\left( z \right)dz}=\int_{a}^{b}{f\left( a+b-z \right)dz}$.
\end{proof}

With the same approach, we can provide another refinement of Hermite-Hadamard inequality.
\begin{theorem}
Let $f$ be a $g-$ convex function on the interval $J$.  Then for $a<b$ in $J$,
\[\begin{aligned}
   f\left( \frac{a+b}{2} \right)&\le \int_{0}^{1}g^{-1}\left( \frac{h\left( \left( 1-v \right)a+vb \right)+h\left( \left( 1-v \right)b+va \right)}{2} \right)dv \\ 
 & \le \int_{0}^{1}{f\left( \left( 1-v \right)a+vb \right)dv} \\ 
 & \le \frac{1}{2}\left( \int_{0}^{1}g^{-1}\left( \left( 1-v \right)h\left( a \right)+vh\left( b \right) \right)dv+\int_{0}^{1}g^{-1}\left( \left( 1-v \right)h\left( b \right)+vh\left( a \right) \right)dv \right) \\ 
 & \le \frac{f\left( a \right)+f\left( b \right)}{2}, 
\end{aligned}\]
where $h=g\circ f.$
\end{theorem}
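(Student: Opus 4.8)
The plan is to prove the four-term chain one link at a time, relying only on the two-point $g$-convex Jensen inequality \eqref{2} from Proposition \ref{7}, the convexity of $g^{-1}$, and the reflection symmetry $v\mapsto 1-v$ on $[0,1]$. Throughout I write $h=g\circ f$ and, for $v\in[0,1]$, set $P_v=(1-v)a+vb$ and $Q_v=(1-v)b+va$; the decisive elementary identity is $\tfrac{P_v+Q_v}{2}=\tfrac{a+b}{2}$, valid for every $v$.

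For the first inequality, I would apply \eqref{2} with the midpoint weight $v=\tfrac12$ and the two points $P_v,Q_v$, which gives, pointwise in $v$,
\[
f\left(\frac{a+b}{2}\right)=f\left(\frac{P_v+Q_v}{2}\right)\le g^{-1}\left(\frac{h(P_v)+h(Q_v)}{2}\right).
\]
Since the left-hand side does not depend on $v$, integrating over $[0,1]$ produces the first inequality at once.

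For the second inequality, I would invoke the convexity of $g^{-1}$ together with $g^{-1}\circ h=f$ to obtain the pointwise bound $g^{-1}\left(\tfrac{h(P_v)+h(Q_v)}{2}\right)\le \tfrac12 f(P_v)+\tfrac12 f(Q_v)$. Integrating and then using the substitution $v\mapsto 1-v$, which sends $Q_v$ to $P_v$, shows that $\int_0^1 f(Q_v)\,dv=\int_0^1 f(P_v)\,dv$; hence both halves coincide with $\int_0^1 f((1-v)a+vb)\,dv$ and the estimate collapses onto the third term.

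The remaining two inequalities follow directly from the two sides of \eqref{2} taken with $x=a$ and $y=b$. The same reflection $v\mapsto 1-v$ shows that the two integrals appearing in the fourth term are equal; integrating $f((1-v)a+vb)\le g^{-1}((1-v)h(a)+vh(b))$ then yields the third inequality, while integrating $g^{-1}((1-v)h(a)+vh(b))\le (1-v)f(a)+vf(b)$ and using $\int_0^1(1-v)\,dv=\int_0^1 v\,dv=\tfrac12$ yields the last. The argument is essentially routine once \eqref{2} is available; the only point demanding care is the symmetrization, namely recognizing that the reflection $v\mapsto 1-v$ identifies the two reflected integrals and makes the convexity estimate in the second inequality telescope cleanly onto the single integral $\int_0^1 f((1-v)a+vb)\,dv$.
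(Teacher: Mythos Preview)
Your argument is correct and is essentially the paper's own proof. The paper assembles the full pointwise chain \eqref{6} in one display and then integrates over $v\in[0,1]$; you treat each link separately and make the reflection $v\mapsto 1-v$ explicit, but the underlying logic---applying \eqref{2} first with the pair $(P_v,Q_v)$ and weight $\tfrac12$, then with the pair $(a,b)$ and weight $v$, and using convexity of $g^{-1}$ for the second step---is identical.
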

\begin{proof}
The inequality \eqref{2} implies that
\begin{equation}\label{6}
\begin{aligned}
   f\left( \frac{a+b}{2} \right)&=f\left( \frac{\left( 1-v \right)a+vb+\left( 1-v \right)b+va}{2} \right) \\ 
 & \le g^{-1}\left( \frac {h\left( \left( 1-v \right)a+vb \right)+h\left( \left( 1-v \right)b+va \right)}{2} \right) \\ 
 & \le \frac{f\left( \left( 1-v \right)a+vb \right)+f\left( \left( 1-v \right)b+va \right)}{2} \\ 
 & \le \frac{g^{-1}\left( \left( 1-v \right)h\left( a \right)+vh\left( b \right) \right)+g^{-1}\left( \left( 1-v \right)h\left( b \right)+vh\left( a \right) \right)}{2} \\ 
 & \le \frac{\left( 1-v \right)f\left( a \right)+vf\left( b \right)+\left( 1-v \right)f\left( b \right)+vf\left( a \right)}{2} \\ 
 & =\frac{f\left( a \right)+f\left( b \right)}{2}.  
\end{aligned}
\end{equation}
Now, the result follows by integrating the inequality \eqref{6} over $v\in \left[ a,b \right]$.
\end{proof}

\section{Index of convexity}
In this section, we define the index of convexity as a positive real number that, somehow, measures how convex the functions is. According to this definition, we will see that a function with larger index of convexity is more convex. This definition is motivated by our earlier discussion of convexity of $g\circ f.$ So, if we select $g(x)=x^{\frac{1}{r}}, r\geq 1,$ we reach the following definition.
\begin{definition}
Let $f:(a,b)\to (0,\infty)$ be a convex function. With $f$, we associate a set of real numbers called {\textit{the set of convex exponents of $f$}} and defined by $$C_{exp}(f)=\{r\geq 1:(f(x))^{\frac{1}{r}}\;{\text{is convex}}\}.$$ The index of convexity of $f$ is then defined by
$$I_{conv}(f)=\sup_r C_{exp}(f).$$
\end{definition}
\begin{example}
It can be easily seen that the power function $f(x)=x^{r}, r\geq 1$ has index of convexity $I_{conv}(f)=r.$\\
On the other hand, if $f(x)=e^{x}$, then 
$$C_{exp}(f)=[1,\infty)\;{\text{and}}\;I_{conv}(f)=\infty.$$
Moreover, the function $f(x)=x^{-1}$, defined on $(0,\infty)$ satisfies
$$C_{exp}(f)=[1,\infty)\;{\text{and}}\;I_{conv}(f)=\infty.$$
Further, the function $f(x)=\tan x$ is convex on $(0,\pi/2)$, with index of convexity 1.
\end{example}

We show some properties of those newly defined concepts.

\begin{proposition}
Let $f:(a,b)\to (0,\infty)$ be a given convex function. Then $C_{exp}$ is an interval. 
\end{proposition}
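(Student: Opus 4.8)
The plan is to show that $C_{exp}(f)$ is downward closed toward $1$; that is, whenever $r \in C_{exp}(f)$ and $1 \le s \le r$, then $s \in C_{exp}(f)$ as well. Combined with the observation that $1 \in C_{exp}(f)$ always holds --- since $f = f^{1/1}$ is convex by hypothesis --- this forces $C_{exp}(f)$ to be an interval with left endpoint $1$, namely of the form $[1, I_{conv}(f)]$ or $[1, I_{conv}(f))$. Thus establishing downward closure is the whole content of the proposition.

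First I would fix $r \in C_{exp}(f)$, so that $g := f^{1/r}$ is convex and strictly positive on $(a,b)$. For any $s$ with $1 \le s \le r$, I would write
\[
f^{1/s} = \left(f^{1/r}\right)^{r/s} = \phi \circ g, \qquad \phi(t) := t^{r/s}.
\]
Since $s \le r$ we have $r/s \ge 1$, so $\phi$ is convex and increasing on $(0,\infty)$, which is precisely the range of the positive convex function $g$.

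The key step is then the standard composition rule: the composition of a convex increasing function with a convex function is convex. Applying it to $\phi \circ g$ yields that $f^{1/s}$ is convex, hence $s \in C_{exp}(f)$, which gives the claimed downward closure and completes the argument.

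I expect the only point requiring care to be the verification that $\phi(t) = t^{r/s}$ is simultaneously convex and increasing on the correct domain; this rests on the inequality $r/s \ge 1$ together with the positivity of $f$, which guarantees that $g$ takes values in $(0,\infty)$, the region where $\phi$ is well-behaved and the composition rule applies without issue. No genuine obstacle arises beyond this bookkeeping, and in particular no appeal to differentiability of $f$ is needed.
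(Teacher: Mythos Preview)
Your proof is correct and rests on the same core observation as the paper: one writes $f^{1/s}$ as a power of $f^{1/r}$ and invokes the monotonicity and convexity/concavity of the relevant power map. The difference is one of direction. You show directly that convexity of $f^{1/r}$ propagates \emph{downward} to $f^{1/s}$ for $1\le s\le r$, via the convex increasing map $t\mapsto t^{r/s}$ on $(0,\infty)$; the paper instead proves that \emph{concavity} of $f^{1/r}$ propagates upward to $f^{1/r'}$ for $r'>r$, via the concave increasing map $t\mapsto t^{r/r'}$, and then concludes that the complement of $C_{exp}(f)$ in $[1,\infty)$ is upward closed. Your route is a bit cleaner: the paper's passage from ``$k_r$ concave implies $k_{r'}$ concave'' to ``$r\notin C_{exp}(f)$ implies $r'\notin C_{exp}(f)$'' silently treats ``not convex'' as ``concave'', whereas your direct downward-closure argument avoids this issue altogether and needs no such identification.
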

\begin{proof}
We first prove that if for some $r>1,$ the function $k_r(x):=(f(x))^{\frac{1}{r}}$ is concave, then so is $k_{r'}$ for any $r'>r.$ Indeed, assuming concavity of $k_r$, we have, for $\alpha,\beta>0$ with $\alpha+\beta=1,$
\begin{align*}
k_{r'}(\alpha x+\beta y)&=(k_{r}(\alpha x+\beta y))^{\frac{r}{r'}}\\
&\geq \left( \alpha k_r(x)+\beta k_r(y)   \right)^{\frac{r}{r'}}\;({\text{by concavity of}}\;k_r)\\
&\geq \alpha (k_r(x))^{\frac{r}{r'}}+\beta (k_r(y))^{\frac{r}{r'}}\;({\text{by concavity of}}\;t\mapsto t^{\frac{r}{r'}})\\
&=\alpha k_{r'}(x)+\beta k_{r'}(y).
\end{align*}
This shows that if $r\not \in C_{exp}(f)$, then $r'\not \in C_{exp}(f)$ for all $r'>r.$\\
Now, if $I_{conv}(f)=\infty$, then $C_{exp}(f)=[1,\infty).$ If not, there would be an $r>1$ such that $r\not \in C_{exp}(f)$, which then implies $r'\not \in C_{exp}(f)$ for all $r'>r,$ which implies that $C_{exp}(f)\subseteq [1,r),$ and hence $I_{conv}(f)\leq r,$ contradicting the assumption that $I_{conv}(f)=\infty.$\\
On the other hand, if $I_{conv}(f)<\infty$, then a similar argument implies that $C_{exp}(f)=[1,I_{conv}(f)].$ 

Thus, we have shown that for any convex $f$, either $ C_{exp}(f)=[1,I_{conv}(f)]$ or $C_{exp}(f)=[1,\infty)$, which completes the proof.
\end{proof}

We know that a twice differentiable convex function satisfies $f''\geq 0.$ In fact, it turns out the the index of convexity can be used to present a new relation between $f, f'$ and $f''$ for convex functions. More precisely, we have the following.

\begin{theorem}\label{thm_new_relation}
Let $f:(a,b)\to (0,\infty)$ be a twice differentiable convex function . Then
$$I_{conv}(f)=\sup\left\{r\geq 1:\left(1-\frac{1}{r}\right)(f'(x))^2\leq f(x)f''(x), \forall x\in (a,b)\right\}.$$ In particular, $(f'(x))^2\leq f(x)f''(x)$ if and only if $I_{conv}(f)=\infty.$ 
\end{theorem}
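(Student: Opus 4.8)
The plan is to reduce the entire statement to a single second-derivative computation and then read off the characterization directly from the definition of $C_{exp}(f)$. Fix $r\geq 1$ and set $k_r(x)=(f(x))^{1/r}$. Since $f>0$ and $f$ is twice differentiable, so is $k_r$, and differentiating twice (chain rule) gives
$$k_r''(x)=\frac{1}{r}\,(f(x))^{\frac{1-2r}{r}}\left(f(x)f''(x)-\Big(1-\tfrac{1}{r}\Big)(f'(x))^2\right).$$
Because $f(x)>0$, the prefactor $\frac{1}{r}(f(x))^{(1-2r)/r}$ is strictly positive, so the sign of $k_r''(x)$ agrees with the sign of $f(x)f''(x)-(1-\tfrac1r)(f'(x))^2$. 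First I would carry out this computation carefully, since it is the single place where the whole argument lives; note that the case $r=1$ recovers $k_1=f$, consistent with $1\in C_{exp}(f)$ by the standing convexity hypothesis.

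With that in hand, the characterization is immediate: $k_r$ is convex on $(a,b)$ exactly when $k_r''\geq 0$ everywhere, i.e. exactly when $(1-\tfrac1r)(f'(x))^2\leq f(x)f''(x)$ for all $x\in(a,b)$. Hence
$$C_{exp}(f)=\left\{r\geq 1:\Big(1-\tfrac1r\Big)(f'(x))^2\leq f(x)f''(x),\ \forall x\in(a,b)\right\},$$
and taking suprema of both sides yields the claimed formula straight from the definition $I_{conv}(f)=\sup C_{exp}(f)$.

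For the ``in particular'' statement I would argue via the monotonicity of $r\mapsto 1-\tfrac1r$. If $(f')^2\leq ff''$ holds everywhere, then for every $r\geq 1$ we have $(1-\tfrac1r)(f')^2\leq (f')^2\leq ff''$, so every $r$ lies in $C_{exp}(f)$ and thus $I_{conv}(f)=\infty$. Conversely, if $I_{conv}(f)=\infty$, then by the preceding proposition (establishing that $C_{exp}$ is an interval, with $C_{exp}(f)=[1,\infty)$ in the infinite case) the inequality $(1-\tfrac1r)(f'(x))^2\leq f(x)f''(x)$ holds for all $x$ and all $r\geq 1$; fixing $x$ and letting $r\to\infty$ gives $(f'(x))^2\leq f(x)f''(x)$.

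The computation in the first step is routine but must be executed with care, as it underpins everything. The only genuinely delicate point is the order of quantifiers in the converse of the ``in particular'' part: one must first invoke $C_{exp}(f)=[1,\infty)$ from the previous proposition, and only then pass to the pointwise limit $r\to\infty$, since the inequality must be known to hold for all $r$ at each fixed $x$ before the limit can be taken.
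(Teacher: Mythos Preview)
Your proof is correct and follows essentially the same route as the paper: compute $k_r''$, observe that its sign coincides with that of $ff''-(1-\tfrac1r)(f')^2$, and read off the equivalence from the definition of $C_{exp}(f)$. Your treatment of the ``in particular'' clause is more explicit than the paper's (which simply asserts it follows immediately), and your invocation of the interval structure of $C_{exp}(f)$ is a clean way to handle the converse, though one could equally well just take a sequence $r_n\to\infty$ in the set and pass to the limit pointwise.
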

\begin{proof}
Let $k_r(x)=(f(x))^{\frac{1}{r}}.$ Convexity of $k_r$ implies positivity of $k_r''$. Direct calculus computations then imply 
$$k_r''\geq 0 \Leftrightarrow \left(1-\frac{1}{r}\right)(f')^2\leq ff'',$$ which implies the first assertion, by definition of $I_{conv}(f).$ The second assertion follows immediately from the first.
\end{proof}
Therefore, the above theorem presents a necessary and sufficient condition for a convex function to satisfy $(f'(x))^2\leq f(x)f''(x);$ as a new property of convex functions.

At this stage, it is interesting to ask about when we can have an equality in both quantities appearing in Theorem \ref{thm_new_relation}. Namely, when do we have
$$\left(1-\frac{1}{r}\right)(f')^2= ff''\;{\text{or}}\;(f')^2= ff''.$$ This is nicely described next. Solving these two ordinary differential equations, we have.
\begin{proposition}\label{prop_equal_diff}
Let $f$ be a twice differentiable function. Then
\begin{itemize}
\item $\left(1-\frac{1}{r}\right)(f')^2= ff'', r>1,$ if and only if 
$$f(x)=\left(\frac{c}{r}x+d\right)^{r}, c,d\in\mathbb{R}.$$
\item $(f')^2= ff''$ if and only if $$f(x)=\alpha e^{\beta x}, \alpha,\beta\in\mathbb{R}.$$
\end{itemize}
\end{proposition}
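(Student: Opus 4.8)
The plan is to recognize both differential equations as disguised statements about the second derivative of a simpler auxiliary function, thereby reducing each to the trivial ODE $\phi''=0$ whose solutions are exactly the affine functions. Throughout I would keep the standing assumption $f>0$ of this section, which is what makes $f^{1/r}$, $\log f$, and the asserted closed forms meaningful.

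For the first equation I would reuse the computation already implicit in the proof of Theorem \ref{thm_new_relation}. Setting $k(x)=(f(x))^{1/r}$, so that $f=k^r$, differentiation gives $f'=rk^{r-1}k'$ and $f''=r(r-1)k^{r-2}(k')^2+rk^{r-1}k''$. Substituting these into $ff''-\left(1-\frac1r\right)(f')^2$ and simplifying, the terms carrying $(k')^2$ cancel and one is left with the clean identity
\[
ff''-\left(1-\frac1r\right)(f')^2=r\,k^{2r-1}\,k''.
\]
Since $f>0$ forces $k>0$ and $r>1>0$, the factor $r\,k^{2r-1}$ is strictly positive, so the left-hand side vanishes identically if and only if $k''\equiv 0$. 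The equation $k''=0$ has precisely the affine solutions $k(x)=\frac{c}{r}x+d$, and raising to the $r$-th power yields $f(x)=\left(\frac{c}{r}x+d\right)^r$; conversely any such $f$ has affine $k$ and hence satisfies the equation. The virtue of this route is that the single identity settles both implications at once.

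For the second equation I would pass to the logarithmic derivative. Assuming $f>0$, one computes
\[
(\log f)''=\frac{ff''-(f')^2}{f^2},
\]
so that $(f')^2=ff''$ is equivalent to $(\log f)''\equiv 0$. This is again the ODE $\phi''=0$ for $\phi=\log f$, whose solutions are the affine functions $\log f(x)=\beta x+\gamma$; exponentiating gives $f(x)=\alpha e^{\beta x}$ with $\alpha=e^{\gamma}>0$, and the converse is a one-line verification. This case is the natural $r\to\infty$ limit of the first, consistent with the fact recorded earlier that $e^x$ has $I_{conv}(e^x)=\infty$.

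No step presents a serious obstacle; the only care needed is the algebraic simplification producing the identity $ff''-\left(1-\frac1r\right)(f')^2=r\,k^{2r-1}k''$, where one must verify that the $(k')^2$-terms cancel exactly, and the positivity assumption on $f$ that legitimizes $f^{1/r}$ and $\log f$. An alternative, more pedestrian route for the first part — writing $f''/f'=\lambda f'/f$ with $\lambda=1-\frac1r$, integrating to $f'=Af^{\lambda}$, and separating variables — also works, but it requires handling the zeros of $f'$ (and the constant-$f$ case) separately, whereas the substitution $k=f^{1/r}$ avoids this entirely; I would therefore prefer the latter.
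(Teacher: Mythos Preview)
Your proposal is correct. The paper itself gives no argument beyond the sentence ``Solving these two ordinary differential equations, we have,'' so your write-up in fact supplies the missing details; the substitutions $k=f^{1/r}$ and $\phi=\log f$ are exactly the ones suggested by the surrounding material (Theorem~\ref{thm_new_relation} and Proposition~\ref{prop_log_conv_ind}), and your algebraic identity $ff''-\bigl(1-\tfrac1r\bigr)(f')^2=r\,k^{2r-1}k''$ checks out.
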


In fact, simple Calculus computations lead to a full characterization of convex functions having index of convexity $I_{conv}(f)=\infty.$ This is explained in the next result.
\begin{proposition}\label{prop_func>exp}
Let $f:[a,b]\to (0,\infty)$ be an increasing convex function satisfying $$f(x)\not=0, \forall x\in [a,b]\;{\text{and}}\;(f')^2\leq ff''.$$ Then, for certain real numbers $\alpha$ and $\beta$,
$$f(x)\geq \alpha e^{\beta x}.$$
\end{proposition}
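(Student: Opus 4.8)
The plan is to recognize that the differential inequality $(f')^2\leq ff''$ is nothing but the statement that $f$ is log-convex, and then to invoke the tangent-line (supporting-line) characterization of convexity for $\log f$. Indeed, since $f>0$ on $[a,b]$ and $f$ is twice differentiable (as the hypothesis $(f')^2\leq ff''$ presupposes), the function $\phi:=\log f$ is twice differentiable and
$$\phi''=\frac{ff''-(f')^2}{f^2}.$$
The hypothesis $(f')^2\leq ff''$ therefore says exactly that $\phi''\geq 0$, that is, $\phi=\log f$ is convex on $[a,b]$. This is consistent with Theorem \ref{thm_new_relation}, which identifies the condition $(f')^2\leq ff''$ with $I_{conv}(f)=\infty$.

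Next I would use the fact that a differentiable convex function lies above each of its tangent lines. Fixing any interior point $x_0\in(a,b)$, convexity of $\phi$ gives
$$\phi(x)\geq \phi(x_0)+\phi'(x_0)(x-x_0),\qquad x\in[a,b].$$
Since $\phi=\log f$ and $\phi'(x_0)=f'(x_0)/f(x_0)$, exponentiating (and using that $t\mapsto e^t$ is increasing) yields
$$f(x)\geq f(x_0)\,\exp\left(\frac{f'(x_0)}{f(x_0)}(x-x_0)\right).$$

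Finally I would read off the constants. Setting
$$\beta:=\frac{f'(x_0)}{f(x_0)},\qquad \alpha:=f(x_0)\,e^{-\beta x_0},$$
the previous display becomes $f(x)\geq \alpha e^{\beta x}$, as required; here $\alpha>0$ because $f(x_0)>0$, and $\beta\geq 0$ because $f$ is increasing and positive, so the comparison is indeed against an increasing exponential. There is essentially no hard step in this argument: once the hypothesis is reinterpreted as log-convexity, the conclusion is the standard tangent-line inequality applied to the convex function $\log f$. The only point to watch is the choice of $x_0$, but since $f$ is twice differentiable, $\phi'(x_0)$ exists at every interior point and any such $x_0$ produces a valid pair $(\alpha,\beta)$.
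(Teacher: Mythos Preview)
Your argument is correct and, at bottom, coincides with the paper's: both establish that $\log f$ is convex and then bound $\log f$ below by a tangent line, which after exponentiation gives $f(x)\ge \alpha e^{\beta x}$. The difference is only in packaging. You compute $(\log f)''=(ff''-(f')^2)/f^2\ge 0$ in one stroke and then quote the supporting-line inequality at an interior point $x_0$. The paper instead rewrites $(f')^2\le ff''$ as $f'/f\le f''/f'$, integrates from $a$ to $x$ to obtain $\frac{f'(x)}{f(x)}\ge \frac{f'(a)}{f(a)}$ (which is exactly the statement that $(\log f)'$ is nondecreasing), and integrates once more to reach the same exponential lower bound with base point $a$. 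Your route is a bit cleaner: it never divides by $f'$, so the ``increasing'' hypothesis is not needed for the core inequality (you only use it, optionally, to observe $\beta\ge 0$), whereas the paper's manipulation $f'/f\le f''/f'$ genuinely requires $f'>0$.
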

\begin{proof}
Observe first that the condition that $f$ is convex follows from the inequality $(f')^2\leq ff''.$ So, we may remove this from the statement of the proposition.\\
Now, rearranging the given inequality, we have for $x\in [a,b],$
$$\frac{f'}{f}\leq \frac{f''}{f'}\Rightarrow \int_{a}^{x}\frac{f'}{f}dt\leq \int_{a}^{x}\frac{f''}{f'}dt.$$ Performing the integrals implies
$$\log \frac{f(x)}{f(a)}\leq \log\frac{f'(x)}{f'(a)}\Rightarrow  \frac{f(x)}{f(a)}\leq \frac{f'(x)}{f'(a)}.$$ The latter inequality implies
$$\frac{f'(x)}{f(x)}\geq \frac{f'(a)}{f(a)}\Rightarrow \log \frac{f(x)}{f(a)}\geq \frac{f'(a)}{f(a)} (x-a).$$ This implies that
$$f(x)\geq f(a)\exp\left(\frac{f'(a)}{f(a)} (x-a)\right),$$ which implies the desired conclusion.
\end{proof}
Combining Theorem \ref{thm_new_relation} with Propositions \ref{prop_equal_diff} and \ref{prop_func>exp} implies the following observation.
\begin{corollary}
Let $f:[a,b]\to (0,\infty)$ be an increasing convex function. If $I_{conv}(f)=\infty$, then $f(x)\geq \alpha e^{\beta x},$ for some positive real numbers $\alpha$ and $\beta.$
\end{corollary}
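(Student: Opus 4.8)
The plan is to obtain the conclusion by chaining the three preceding results, reading the hypothesis $I_{conv}(f)=\infty$ through the differential inequality it encodes. First I would invoke Theorem \ref{thm_new_relation}: since $f$ is (twice differentiable) convex with $I_{conv}(f)=\infty$, the ``in particular'' clause of that theorem gives at once
$$(f'(x))^2\le f(x)f''(x),\qquad \forall x\in (a,b).$$
Thus the assumption on the index of convexity has been converted into exactly the pointwise inequality that Proposition \ref{prop_func>exp} takes as input.

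Next I would feed this into Proposition \ref{prop_func>exp}. The function $f$ is increasing, everywhere positive (hence nonvanishing) and convex, and we have just shown $(f')^2\le ff''$, so the proposition applies verbatim and yields
$$f(x)\ge f(a)\exp\!\left(\frac{f'(a)}{f(a)}\,(x-a)\right).$$
Rewriting the right-hand side as $\alpha e^{\beta x}$ with $\alpha=f(a)\exp(-\tfrac{f'(a)}{f(a)}a)$ and $\beta=\tfrac{f'(a)}{f(a)}$ already produces a bound of the desired shape. Positivity of $\alpha$ is automatic from $f(a)>0$, and $\beta\ge 0$ because $f$ increasing forces $f'(a)\ge 0$.

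The one point that needs care, and the only genuine obstacle, is upgrading $\beta\ge 0$ to the strict $\beta>0$ demanded by the statement. This can fail at the specific base point $a$: for instance $f(x)=e^{(x-a)^2}$ is increasing, positive and log-convex (so $I_{conv}(f)=\infty$, since $(f')^2\le ff''$ is precisely log-convexity of $f$), yet $f'(a)=0$. To repair this I would anchor the estimate not at $a$ but at a point $x_0\in(a,b)$ with $f'(x_0)>0$; such $x_0$ exists because $f'$ is nondecreasing (by convexity) and not identically zero (by $f$ increasing). Running the integration of Proposition \ref{prop_func>exp} from $x_0$ gives $f(x)\ge f(x_0)e^{\beta(x-x_0)}$ with $\beta=f'(x_0)/f(x_0)>0$ on $[x_0,b]$, while on $[a,x_0]$ monotonicity gives $f(x)\ge f(a)>0$. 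Choosing $\alpha>0$ small enough that $\alpha e^{\beta b}\le f(a)$ then makes the single exponential $\alpha e^{\beta x}$ a lower bound on all of $[a,b]$, delivering both $\alpha>0$ and $\beta>0$.

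Finally, I would note that Proposition \ref{prop_equal_diff} plays a complementary role here: its second bullet identifies $f=\alpha e^{\beta x}$ as exactly the equality case $(f')^2=ff''$, which shows that the exponential lower bound is the natural extremal profile for functions of infinite convexity index and explains why the bound in the corollary takes exponential form.
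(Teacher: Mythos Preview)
Your approach is essentially the same as the paper's, which simply states that the corollary follows by combining Theorem \ref{thm_new_relation} with Propositions \ref{prop_equal_diff} and \ref{prop_func>exp}. In fact you are more careful than the paper: the paper does not address the possibility $f'(a)=0$ (so $\beta=0$) at all, whereas you correctly identify this gap and repair it by re-anchoring the estimate at an interior point where $f'>0$.
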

At this point, it is worth looking at the function $f(x)=x^{-1}, [1,\infty).$ This function satisfies $I_{conv}(f)=\infty,$ however it is not increasing! Therefore, it does not satisfy the conclusion of the above corollary.

Next, we present the following relation between $\log-$convexity and index of convexity.
\begin{proposition}\label{prop_log_conv_ind}
Let $f$ be a $\log-$convex function on the interval $J$. Then $I_{conv}(f)=\infty.$
\end{proposition}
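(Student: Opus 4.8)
The plan is to show directly that every $r\geq 1$ lies in $C_{exp}(f)$; since $C_{exp}(f)\subseteq[1,\infty)$ by definition, this forces $C_{exp}(f)=[1,\infty)$ and hence $I_{conv}(f)=\sup_r C_{exp}(f)=\infty$. The key observation is that, since $f>0$, one can write for every $r\geq 1$
$$
\left(f(x)\right)^{1/r}=\exp\left(\frac{1}{r}\log f(x)\right),
$$
so that $f^{1/r}$ is the composition of the increasing convex function $\exp$ with the function $\frac{1}{r}\log f$.

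First I would set $\phi:=\log f$, which is convex precisely by the hypothesis that $f$ is $\log$-convex. Since $r\geq 1>0$, the scaled function $\frac{1}{r}\phi$ is again convex, because a positive multiple of a convex function is convex. Next I would invoke the standard composition rule for convexity: if $g$ is convex and $h$ is convex and nondecreasing, then $h\circ g$ is convex. Applying this with $h=\exp$ (convex and increasing on all of $\mathbb{R}$) and $g=\frac{1}{r}\phi$ yields that $\exp\left(\frac{1}{r}\phi\right)=f^{1/r}$ is convex. As $r\geq 1$ was arbitrary, $r\in C_{exp}(f)$ for every $r\geq 1$, which gives the claim.

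I do not anticipate a genuine obstacle here; the argument is a clean composition and, pleasantly, requires no differentiability of $f$. If one prefers a differential route in the smooth case, the statement also follows from Theorem \ref{thm_new_relation}: for twice differentiable $f$, $\log$-convexity is exactly the condition $(\log f)''\geq 0$, and a direct computation gives $(\log f)''=\frac{f''}{f}-\left(\frac{f'}{f}\right)^2$, so that $(\log f)''\geq 0\Leftrightarrow (f')^2\leq ff''$. The latter is precisely the condition that Theorem \ref{thm_new_relation} identifies with $I_{conv}(f)=\infty$. The only mild care needed is to recognize that the composition argument already settles the general continuous case, while the differential criterion merely recovers the same conclusion under the extra smoothness assumption.
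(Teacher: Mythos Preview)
Your proof is correct and follows essentially the same route as the paper: both arguments observe that $\log\bigl(f^{1/r}\bigr)=\tfrac{1}{r}\log f$ is convex (as a positive multiple of the convex function $\log f$), and then conclude that $f^{1/r}$ is convex. The only cosmetic difference is that the paper phrases the last step as ``$\log k_r$ is convex, hence $k_r$ is convex'' (i.e., log-convex $\Rightarrow$ convex), whereas you make this explicit by writing $f^{1/r}=\exp\bigl(\tfrac{1}{r}\log f\bigr)$ and invoking the composition rule with the increasing convex function $\exp$; these are the same argument. Your additional remark recovering the result in the smooth case via Theorem~\ref{thm_new_relation} is a pleasant consistency check but not needed for the proof.
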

\begin{proof}
If $f$ is $\log-$convex, then $g(x)=\log f(x)$ is convex. Let $k_r(x)=(f(x))^{\frac{1}{r}}, r\geq 1.$ Then
$$\log k_r(x)=\frac{1}{r}\log f(x),$$ which is convex. Therefore, $k_r$ is convex for all $r\geq 1.$ This implies that $I_{conv}(f)=\infty.$
\end{proof}

\section{Applications to Hilbert space operators}

In this section we study operator inequalities for a composite function of two functions. We remind the reader, first, of some terminologies and notations. Let $\mathcal{B}(\mathcal{H})$ denote the $C^*-$algebra of all bounded linear operators acting on a Hilbert space $\mathcal{H}$. When $\mathcal{H}$ is finite dimensional, say of dimension $n$, the algebra $\mathcal{B}(\mathcal{H})$ is identified with the algebra of all complex $n\times n$ matrices, denoted $\mathcal{M}_n$. A real function $f$ defined on an interval $J$ is said to be operator monotone if $f(A)\geq f(B)$ whenever $A,B\in\mathcal{B}(\mathcal{H})$ are self adjoint operators (or Hermitian matrices) such that $A\geq B$, with spectra in $J$. In this context, we write $A\geq B$ if $A-B$ is a positive operator. That is, if $\left<Ax,x\right>\geq \left<Bx,x\right>$ for all vectors $x\in\mathcal{H}.$ On the other hand, $f$ will be called an operator convex function if for any pair of self adjoint operators $A,B\in\mathcal{B}(\mathcal{H})$ and any $t\in[0,1]$, we have the convex inequality $f((1-t)A+tB)\leq (1-t)f(A)+tf(B).$ Operator concave functions are defined similarly.

Firstly, we consider two continuous positive functions $f$ and $g$ defined on $(0,\infty)$.
If $f$ and $g$ are operator monotone functions, then the composite function $g\circ f$ is clearly operator monotone.   For slightly different conditions on $f$ and $g$, we have the following theorem.

\begin{theorem}
Let $f:\left[ 0,\infty  \right)\to \left[ 0,\infty  \right)$ be a real-valued continuous function. If $g:\left[ 0,\infty  \right)\to \left[ 0,\infty  \right)$ is increasing operator convex such that $g\circ f$ is operator concave, then $f$ is  operator concave. In particular, if $A,B\in \mathcal{B}\left( \mathcal{H} \right)$ are two positive operators then
	\[\begin{aligned}
   f\left( \left( 1-v \right)A+vB \right)&\ge {{g}^{-1}}\left( \left( 1-v \right)(g\circ f)\left( A \right)+v(g\circ f)\left( B \right) \right) \\ 
 & \ge \left( 1-v \right)f\left( A \right)+vf\left( B \right). 
\end{aligned}\]
\end{theorem}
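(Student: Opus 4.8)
The plan is to transport the scalar scheme of Proposition~\ref{7} to the operator setting, replacing convexity by concavity throughout and scalar inequalities by their operator counterparts. The one genuinely new feature compared with the scalar case is that applying a map to both sides of an operator inequality preserves the order only when that map is \emph{operator monotone}, not merely increasing. Consequently everything hinges on a single structural fact about $g^{-1}$, which I would establish first; the rest is a faithful mirror of Proposition~\ref{7}.

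\emph{Step 1 (the crux).} I would show that $g^{-1}$ is operator monotone. Since $g$ is continuous, strictly increasing, and maps $[0,\infty)$ onto $[0,\infty)$, its inverse is a well-defined non-negative function $g^{-1}:[0,\infty)\to[0,\infty)$ (after the harmless normalization $g(0)=0$, which one may assume since operator convexity is unaffected by adding a constant). The tool here is the inverse-function principle for operator convexity: an increasing operator convex function of $[0,\infty)$ onto itself vanishing at $0$ has operator monotone inverse. I regard verifying or citing this implication as the main obstacle, since the elementary calculus fact ``increasing convex $\Rightarrow$ increasing concave inverse'' does not transfer verbatim; the correct statement is genuinely one-directional (the converse fails, for instance $t\mapsto t^{3}$ has operator monotone inverse $t\mapsto t^{1/3}$ yet is itself not operator convex) and rests on L\"owner-type theory. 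Once $g^{-1}$ is known to be operator monotone, the standard equivalence that a non-negative function on $[0,\infty)$ is operator monotone if and only if it is operator concave shows that $g^{-1}$ is simultaneously operator concave.

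\emph{Step 2 (the two inequalities).} With $g^{-1}$ operator monotone and operator concave, both displayed inequalities follow exactly as in Proposition~\ref{7}. Operator concavity of $g\circ f$ gives, for positive $A,B$ and $v\in[0,1]$,
$$
(g\circ f)\left((1-v)A+vB\right)\ge (1-v)(g\circ f)(A)+v(g\circ f)(B).
$$
Applying the operator monotone map $g^{-1}$ to both sides and using $g^{-1}\circ(g\circ f)=f$ through the functional calculus yields the first inequality. The second inequality is precisely operator concavity of $g^{-1}$ evaluated at the positive operators $(g\circ f)(A)$ and $(g\circ f)(B)$, combined with the identities $g^{-1}((g\circ f)(A))=f(A)$ and $g^{-1}((g\circ f)(B))=f(B)$.

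\emph{Step 3 (conclusion).} To see that $f$ itself is operator concave I would write $f=g^{-1}\circ(g\circ f)$ as a composition of operator monotone functions: the inner factor $g\circ f$ is non-negative and operator concave, hence operator monotone by the same equivalence used in Step~1, while the outer factor $g^{-1}$ is operator monotone. Since the composition of operator monotone functions is operator monotone (as already noted in the discussion preceding the theorem), $f$ is operator monotone, and being non-negative on $[0,\infty)$ it is therefore operator concave; alternatively, chaining the two inequalities of Step~2 already delivers $f((1-v)A+vB)\ge (1-v)f(A)+vf(B)$ directly. Apart from securing the inverse-function principle in Step~1, every manipulation is routine.
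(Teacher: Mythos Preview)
Your proposal is correct and follows essentially the same route as the paper: the crux is exactly the fact that an increasing operator convex $g$ on $[0,\infty)$ has operator monotone inverse (the paper cites Pedersen--Uchiyama for this), after which operator monotonicity of $g^{-1}$ gives the first inequality and its operator concavity (via the monotone $\Leftrightarrow$ concave equivalence on $[0,\infty)$, which the paper cites from Ando--Hiai) gives the second. Your Step~3 is slightly more elaborate than needed---the paper simply reads operator concavity of $f$ off the chained inequality, as in your alternative (b).
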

\begin{proof}
It follows from the operator concavity of $g\circ f$ that
	\[(g\circ f)\left( \left( 1-v \right)A+vB \right)\ge \left( 1-v \right)(g\circ f)\left( A \right)+v(g\circ f)\left( B \right).\]
On the other hand, it is shown in \cite[Proposition 2.3]{2} that if $g$ is an increasing operator convex function on $\left[ 0,\infty  \right)$, then ${{g}^{-1}}$ is operator monotone on $\left[ 0,\infty  \right)$. Thus,
	\[\begin{aligned}
   f\left( \left( 1-v \right)A+vB \right)&\ge {{g}^{-1}}\left( \left( 1-v \right)(g\circ f)\left( A \right)+v(g\circ f)\left( B \right) \right) \\ 
 & \ge \left( 1-v \right)f\left( A \right)+vf\left( B \right),  
\end{aligned}\]
where the second inequality follows from the fact that a function $h$ is operator monotone on a half-line $\left[ 0,\infty  \right)$ if and only if $h$ is operator concave \cite[Theorem 2.3]{3}.
\end{proof}

\begin{proposition}
Let $f$ be $g-$convex and let $A\in\mathcal{B}(\mathcal{H})$ be self adjoint. If $x\in\mathcal{H}$ is a unit vector, then
$$f\left(\left<Ax,x\right>\right)\leq g^{-1}\left(\left<(g\circ f)(A)x,x\right>\right)\leq \left<f(A)x,x\right>.$$
\end{proposition}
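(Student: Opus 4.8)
The plan is to mirror the proof of Proposition \ref{7}, replacing the finite Jensen inequality by its integral (spectral) counterpart. Since $A$ is self adjoint with spectrum contained in $J$, the spectral theorem furnishes a probability measure $\mu_x$ on $J$ (namely $E\mapsto \langle E(A)x,x\rangle$, determined by the spectral resolution of $A$) for which $\langle \psi(A)x,x\rangle=\int_J \psi\,d\mu_x$ for every continuous $\psi$, and in particular $\langle Ax,x\rangle=\int_J \lambda\,d\mu_x(\lambda)$. That $\mu_x$ is a probability measure is exactly where the unit-vector hypothesis enters, since $\mu_x(J)=\langle x,x\rangle=1$; consequently $\langle Ax,x\rangle$ lies in the convex hull of the spectrum, hence in $J$, so $f(\langle Ax,x\rangle)$ is well defined.

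For the left-hand inequality I would apply the scalar Jensen inequality (the integral form of \eqref{jensen_intro}) to the convex function $h=g\circ f$ against the probability measure $\mu_x$. This gives
$$h(\langle Ax,x\rangle)=h\left(\int_J \lambda\,d\mu_x\right)\leq \int_J h\,d\mu_x=\langle (g\circ f)(A)x,x\rangle.$$
Applying the increasing function $g^{-1}$ to both sides, and using $g^{-1}\circ h=g^{-1}\circ g\circ f=f$ on the left (valid since the increasing $g$ is injective), yields
$$f(\langle Ax,x\rangle)\leq g^{-1}\left(\langle (g\circ f)(A)x,x\rangle\right),$$
which is the first claimed inequality.

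For the right-hand inequality I would instead apply the integral Jensen inequality to the convex function $g^{-1}$, evaluated along the integrand $h=g\circ f$. Concretely,
$$g^{-1}\left(\langle (g\circ f)(A)x,x\rangle\right)=g^{-1}\left(\int_J h\,d\mu_x\right)\leq \int_J g^{-1}(h)\,d\mu_x=\int_J f\,d\mu_x=\langle f(A)x,x\rangle,$$
where the inequality is Jensen for the convex $g^{-1}$ and the penultimate equality again uses $g^{-1}\circ h=f$. Combining the two displays completes the argument.

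The operator content is thus entirely absorbed into the scalar probability measure $\mu_x$, so no genuinely operator-theoretic difficulty arises. I expect the main obstacle to be purely a matter of bookkeeping: verifying that $g^{-1}$ is honestly convex (equivalently, that $g$ is increasing and concave, as recorded in the Remark following Proposition \ref{7}) so that Jensen is legitimate in the second step, and checking that every composition stays inside the intervals $J_1,J_2,J_3$ on which $f$, $g$, and $g^{-1}$ are defined. Once these domain conditions are confirmed, both inequalities follow from the same scalar Jensen inequality used twice, with $h$ convex for the first and $g^{-1}$ convex for the second.
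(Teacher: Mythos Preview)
Your proof is correct and follows essentially the same route as the paper: both arguments reduce to two applications of the scalar Jensen inequality via the spectral measure $\mu_x$, the first using convexity of $h=g\circ f$ and the second using concavity of $g$ (equivalently, convexity of $g^{-1}$). The paper is simply terser, writing the chain $(g\circ f)(\langle Ax,x\rangle)\leq \langle (g\circ f)(A)x,x\rangle\leq g(\langle f(A)x,x\rangle)$ and then applying $g^{-1}$ once at the end, whereas you unpack the spectral integration explicitly and phrase the second step as Jensen for $g^{-1}$ rather than for $g$; the content is identical.
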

\begin{proof}
Since $f$ is $g-$convex, we have
\begin{align*}
(g\circ f)\left(\left<Ax,x\right>\right)&\leq \left<(g\circ f)(A)x,x\right>\leq g\left(\left<f(A)x,x\right>\right),
\end{align*}
which implies the desired result, upon applying $g^{-1}$ to the above inequalities.
\end{proof}

Let ${{\mathcal{M}}_{n}}$ denote the ${{C}^{*}}$-algebra of $n\times n$ complex matrices with identity $I$ and let ${{\mathcal{H}}_{n}}$ be the set of all Hermitian matrices in ${{\mathcal{M}}_{n}}$. We denote by ${{\mathcal{H}}_{n}}\left( J \right)$ the set of all Hermitian matrices in ${{\mathcal{M}}_{n}}$ whose spectra are contained in an interval $J\subseteq \mathbb{R}$.  The notation $\prec_w$ will be used to denote weak majorization, while $\lambda(A)$ will denote the eigenvalues vector of the Hermitian matrix $A$, arranged in a decreasing order.
\begin{theorem}\label{10}
Let ${{A}_{1}},\ldots ,{{A}_{k}}\in {{\mathcal{H}}_{m}}\left( J \right)$, $f$ be   $g$-convex  on the real interval $J$, and let ${{w}_{1}},\ldots ,{{w}_{k}}$ be positive scalars such that $\sum\nolimits_{i=1}^{k}{{{w}_{i}}}=1$. Then
\[\lambda \left( f\left( \sum\limits_{i=1}^{k}{{{w}_{i}}{{A}_{i}}} \right) \right)~{{\prec }_{w}}~\lambda \left( {{g}^{-1}}\left( \sum\limits_{i=1}^{k}{{{w}_{i}}(g\circ f)\left( {{A}_{i}} \right)} \right) \right)~{{\prec }_{w}}~\lambda \left( \left( \sum\limits_{i=1}^{k}{{{w}_{i}}f\left( {{A}_{i}} \right)} \right) \right).\]
\end{theorem}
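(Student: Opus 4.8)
The plan is to reduce the whole three-term statement to one external weak-majorization principle for ordinary (not necessarily operator) convex functions, namely the theorem of Aujla and Silva: for a convex function $\phi$ on $J$, Hermitian matrices $X_1,\dots,X_k$ with spectra in $J$, and positive weights summing to $1$, one has $\lambda\!\left(\phi\!\left(\sum_i w_i X_i\right)\right)\prec_w\lambda\!\left(\sum_i w_i \phi(X_i)\right)$. Throughout I would write $B=\sum_i w_i A_i$ and $h=g\circ f$, so that $h$ is convex and, since $g^{-1}$ is increasing, $f(B)=g^{-1}(h(B))$, while the middle term is $C:=g^{-1}\!\left(\sum_i w_i h(A_i)\right)$ and the right-hand term is $D:=\sum_i w_i f(A_i)$. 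The point is that $C$ can be read in two different ways, each of which matches one side of the Aujla--Silva principle.

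For the right-hand majorization $\lambda(C)\prec_w\lambda(D)$ I would apply the Aujla--Silva inequality to the convex function $\phi=g^{-1}$ (convex because $g$ is increasing and concave) and to the Hermitian matrices $X_i=h(A_i)$. Then $\phi\!\left(\sum_i w_i X_i\right)=g^{-1}\!\left(\sum_i w_i h(A_i)\right)=C$, while $\sum_i w_i \phi(X_i)=\sum_i w_i g^{-1}(h(A_i))=\sum_i w_i f(A_i)=D$, so the principle yields $\lambda(C)\prec_w\lambda(D)$ immediately.

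For the left-hand majorization $\lambda(f(B))\prec_w\lambda(C)$ I would first apply the same principle to the convex function $h$ and the matrices $A_i$, obtaining $\lambda(h(B))\prec_w\lambda\!\left(\sum_i w_i h(A_i)\right)$. Then I would transport this forward through the increasing convex map $g^{-1}$, using the standard fact that weak majorization is preserved by an increasing convex function applied entrywise: if $\lambda(X)\prec_w\lambda(Y)$ and $\psi$ is increasing and convex, then $\lambda(\psi(X))\prec_w\lambda(\psi(Y))$. Because $g^{-1}$ is increasing it preserves the decreasing order of eigenvalues, so $\lambda_i(g^{-1}(h(B)))=g^{-1}(\lambda_i(h(B)))=\lambda_i(f(B))$ and likewise the images of $\sum_i w_i h(A_i)$ are exactly the $\lambda_i(C)$; the preserved majorization therefore reads precisely $\lambda(f(B))\prec_w\lambda(C)$.

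The step I expect to demand the most care is the interplay between the two majorization facts and the spectral calculus rather than any single hard estimate. One must verify that $g^{-1}$ is genuinely increasing and convex on the range of $h$, so that it is simultaneously an admissible $\phi$ for the Aujla--Silva step on the right and an admissible order-preserving map for the transport step on the left, and one must justify that applying the monotone $g^{-1}$ to a Hermitian matrix commutes with taking the ordered eigenvalues, i.e.\ $\lambda_i(g^{-1}(\cdot))=g^{-1}(\lambda_i(\cdot))$. Given the $g$-convexity of $f$, which already supplies convexity of $h$ and the increasing-convex inverse $g^{-1}$, these hypotheses are exactly those in force, and the two displayed weak majorizations chain together into the claimed three-term inequality.
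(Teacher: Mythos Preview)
Your argument is correct, and it is somewhat more economical than the paper's. For the right-hand majorization the two proofs coincide: the paper invokes \cite[Remark~2.1(ii)]{1}, which is precisely the Aujla--Silva principle you apply to $\phi=g^{-1}$ and $X_i=h(A_i)$. The difference is in the left-hand majorization. The paper argues directly from eigenvectors: it fixes an orthonormal eigenbasis $x_1,\dots,x_n$ for $B=\sum_i w_iA_i$ ordered so that $f(\lambda_1)\ge\cdots\ge f(\lambda_n)$, applies the scalar refinement of Jensen's inequality (Proposition~\ref{7}) together with the scalar Jensen inequality for $g\circ f$ and for $g^{-1}$ inside each inner product $\langle\,\cdot\,x_\ell,x_\ell\rangle$, and finishes with the Ky~Fan maximum principle $\sum_{\ell\le l}\langle Cx_\ell,x_\ell\rangle\le\sum_{\ell\le l}\lambda_\ell(C)$. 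You instead factor the step through two standard facts: Aujla--Silva for $h$ gives $\lambda(h(B))\prec_w\lambda\bigl(\sum_i w_i h(A_i)\bigr)$, and then the preservation of weak majorization under the increasing convex map $g^{-1}$ (Hardy--Littlewood--P\'olya type), combined with the spectral-mapping identity $\lambda_j(g^{-1}(X))=g^{-1}(\lambda_j(X))$ for monotone $g^{-1}$, pushes this forward to $\lambda(f(B))\prec_w\lambda(C)$. Your route is more modular and reuses the same external lemma twice; the paper's route is more self-contained for the first inequality and makes the role of the scalar $g$-convex Jensen refinement explicit.
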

\begin{proof}
Let ${{\lambda }_{1}},\ldots ,{{\lambda }_{n}}$ be the eigenvalues of $\sum\nolimits_{i=1}^{k}{{{w}_{i}}{{A}_{i}}}$ and let ${{x}_{1}},\ldots ,{{x}_{n}}$ be the corresponding orthonormal eigenvectors arranged such that $f\left( {{\lambda }_{1}} \right)\ge \ldots \ge f\left( {{\lambda }_{n}} \right)$. Therefore, for $1\le l\le n$,
\[\begin{aligned}
   \sum\limits_{\ell=1}^{l}{{{\lambda }_{\ell}}\left( f\left( \sum\limits_{i=1}^{k}{{{w}_{i}}{{A}_{i}}} \right) \right)}&=\sum\limits_{\ell=1}^{l}{f\left( \left\langle \sum\limits_{i=1}^{k}{{{w}_{i}}{{A}_{i}}{{x}_{\ell}},{{x}_{\ell}}} \right\rangle  \right)} \\ 
 & \le \sum\limits_{\ell=1}^{l}{{{g}^{-1}}\left( \sum\limits_{i=1}^{k}{{{w}_{i}}(g\circ f)\left( \left\langle {{A}_{i}}{{x}_{\ell}},{{x}_{\ell}} \right\rangle  \right)} \right)} \quad \text{(by \eqref{1})}\\ 
 & \le \sum\limits_{\ell=1}^{l}{{{g}^{-1}}\left( \sum\limits_{i=1}^{k}{{{w}_{i}}\left( \left\langle (g\circ f)\left( {{A}_{i}} \right){{x}_{\ell}},{{x}_{\ell}} \right\rangle  \right)} \right)}\\
 & \qquad \text{(since $g\circ f$ is convex and $g^{-1}$ is increasing)}\\ 
 & =\sum\limits_{\ell=1}^{l}{{{g}^{-1}}\left( \left\langle \sum\limits_{i=1}^{k}{{{w}_{i}}(g\circ f)\left( {{A}_{i}} \right)}{{x}_{\ell}},{{x}_{\ell}} \right\rangle  \right)} \\ 
 & \le \sum\limits_{\ell=1}^{l}{\left\langle {{g}^{-1}}\left( \sum\limits_{i=1}^{k}{{{w}_{i}}(g\circ f)\left( {{A}_{i}} \right)} \right){{x}_{\ell}},{{x}_{\ell}} \right\rangle } \\
 &\qquad \text{(since $g^{-1}$ is convex)}\\ 
 & \le \sum\limits_{\ell=1}^{l}{{{\lambda }_{\ell}}\left( {{g}^{-1}}\left( \sum\limits_{i=1}^{k}{{{w}_{i}}(g\circ f)\left( {{A}_{i}} \right)} \right) \right)}.  
\end{aligned}\]
Therefore,
\begin{equation}\label{9}
\lambda \left( f\left( \sum\limits_{i=1}^{k}{{{w}_{i}}{{A}_{i}}} \right) \right)~{{\prec }_{w}}~\lambda \left( {{g}^{-1}}\left( \sum\limits_{i=1}^{k}{{{w}_{i}}(g\circ f)\left( {{A}_{i}} \right)} \right) \right).
\end{equation}
On the other hand, by \cite[Remark 2.1 (ii)]{1}
\begin{equation}\label{8}
\lambda \left( {{g}^{-1}}\left( \sum\limits_{i=1}^{k}{{{w}_{i}}(g\circ f)\left( {{A}_{i}} \right)} \right) \right)~{{\prec }_{w}}~\lambda \left( \left( \sum\limits_{i=1}^{k}{{{w}_{i}}{{g}^{-1}}\left( (g\circ f)\left( {{A}_{i}} \right) \right)} \right) \right)=\lambda \left( \left( \sum\limits_{i=1}^{k}{{{w}_{i}}f\left( {{A}_{i}} \right)} \right) \right).
\end{equation}
Combining \eqref{7} and \eqref{8}, we infer that
\[\lambda \left( f\left( \sum\limits_{i=1}^{k}{{{w}_{i}}{{A}_{i}}} \right) \right)~{{\prec }_{w}}~\lambda \left( {{g}^{-1}}\left( \sum\limits_{i=1}^{k}{{{w}_{i}}(g\circ f)\left( {{A}_{i}} \right)} \right) \right)~{{\prec }_{w}}~\lambda \left( \left( \sum\limits_{i=1}^{k}{{{w}_{i}}f\left( {{A}_{i}} \right)} \right) \right).\]
This completes the proof of the theorem. 
\end{proof}

As a direct consequence of Theorem \ref{10}, we have the following result:
\begin{corollary}\label{11}
Let ${{A}_{1}},\ldots ,{{A}_{k}}\in {{\mathcal{H}}_{m}}\left( J \right)$, and let ${{w}_{1}},\ldots ,{{w}_{k}}$ be positive scalars such that $\sum\nolimits_{i=1}^{k}{{{w}_{i}}}=1$. Then for any $r\ge 2$,
	\[\begin{aligned}
   \lambda \left( {{\left( \sum\limits_{i=1}^{k}{{{w}_{i}}{{A}_{i}}} \right)}^{r}} \right)&~{{\prec }_{w}}~\frac{1}{2}\lambda \left( 2\sum\limits_{i=1}^{k}{{{w}_{i}}\left( A_{i}^{r}+A_{i}^{\frac{r}{2}} \right)}+I-\sqrt{4\sum\limits_{i=1}^{k}{{{w}_{i}}\left( A_{i}^{r}+A_{i}^{\frac{r}{2}} \right)}+I} \right) \\ 
 &~ {{\prec }_{w}}~\lambda \left( \sum\limits_{i=1}^{k}{{{w}_{i}}A_{i}^{r}} \right).  
\end{aligned}\]
\end{corollary}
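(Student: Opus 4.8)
The plan is to derive this corollary directly from Theorem \ref{10} by choosing the functions $f$ and $g$ so that the three quantities in that theorem specialize to the three terms displayed here. Reading off the outer two terms, I would set $f(x)=x^{r}$, since then $f\left(\sum_i w_i A_i\right)=\left(\sum_i w_i A_i\right)^{r}$ and $\sum_i w_i f(A_i)=\sum_i w_i A_i^{r}$, which are exactly the first and last eigenvalue vectors. It then remains to recognize the middle term as $g^{-1}\left(\sum_i w_i (g\circ f)(A_i)\right)$ for a suitable $g$.

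The key step is to identify $g$, and I would be guided by the expression $\sum_i w_i\bigl(A_i^{r}+A_i^{r/2}\bigr)$ sitting inside the square root: this should equal $\sum_i w_i (g\circ f)(A_i)$, so I want $(g\circ f)(x)=x^{r}+x^{r/2}$. Writing $y=f(x)=x^{r}$, this forces $g(y)=y+\sqrt{y}$. I would then verify the hypotheses of the $g$-convexity framework on $[0,\infty)$, which is the range the functional calculus requires for the spectra so that $A_i^{r/2}$ and the square roots are defined: $g(y)=y+\sqrt{y}$ is increasing and concave since $g'(y)=1+\tfrac12 y^{-1/2}>0$ and $g''(y)=-\tfrac14 y^{-3/2}<0$, while $(g\circ f)(x)=x^{r}+x^{r/2}$ is convex precisely because $r\ge 2$ makes both $x^{r}$ and $x^{r/2}$ convex. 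This is exactly where the restriction $r\ge 2$ enters. Hence $f$ is $g$-convex on $J\subseteq[0,\infty)$ and Theorem \ref{10} applies.

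The remaining computation is to invert $g$ and confirm it reproduces the stated middle term. Solving $u=y+\sqrt{y}$ for $y$ is a quadratic in $\sqrt{y}$, giving $\sqrt{y}=\tfrac12\left(-1+\sqrt{1+4u}\right)$ and hence $g^{-1}(u)=\tfrac12\left(2u+1-\sqrt{4u+1}\right)$. Applying this through the functional calculus to $S:=\sum_i w_i\bigl(A_i^{r}+A_i^{r/2}\bigr)$ yields $g^{-1}(S)=\tfrac12\left(2S+I-\sqrt{4S+I}\right)$, and since $\lambda\left(\tfrac12 M\right)=\tfrac12\lambda(M)$ this is exactly the middle eigenvalue vector in the statement. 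Note that $4S+I\ge I$ is positive definite, so the square root is unambiguous. Substituting $f$, $g$, and $g^{-1}$ into Theorem \ref{10} then reproduces the two weak-majorization relations verbatim.

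I expect no genuine obstacle beyond correctly guessing $g(y)=y+\sqrt{y}$ from the shape of the middle term and carrying out the quadratic inversion without sign errors; the convexity and concavity checks and the matrix substitution are routine once $g$ is in hand.
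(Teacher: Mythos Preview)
Your proposal is correct and matches the paper's proof essentially line for line: the paper also chooses $f(x)=x^{r}$, $g(y)=y+\sqrt{y}$, verifies that $g$ is increasing concave and $g\circ f$ convex for $r\ge 2$, computes $g^{-1}(u)=\tfrac12\bigl(2u+1-\sqrt{4u+1}\bigr)$, and invokes Theorem~\ref{10}. Your verification of the convexity of $x^{r}+x^{r/2}$ as a sum of convex powers is actually cleaner than the paper's explicit second-derivative computation, but otherwise the arguments coincide.
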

\begin{proof}
Letting $g\left( x \right)=x+\sqrt{x}$ on $\left[ 0,\infty  \right)$. Then $g'\left( x \right)=\frac{1}{2\sqrt{x}}+1\ge 0$ and $g''\left( x \right)=-\frac{1}{4\sqrt{{{x}^{3}}}}\le 0$. Thus $g$ is increasing and concave. Put $f\left( x \right)={{x}^{r}}\left( r\ge 2 \right)$ on $\left[ 0,\infty  \right)$. Therefore, $g\left( f\left( x \right) \right)={{x}^{r}}+{{x}^{\frac{r}{2}}}$ and $g''\left( f\left( x \right) \right)=\frac{r\left( \left( 4r-4 \right){{x}^{r}}+\left( r-2 \right){{x}^{\frac{r}{2}}} \right)}{4{{x}^{2}}}\ge 0$, namely $g\left( f\left( x \right) \right)$ is a convex function. Since ${{g}^{-1}}\left( x \right)=\frac{2x+1-\sqrt{4x+1}}{2}$, we get the desired result.
\end{proof}

We give an example to clarify the situation in Corollary \ref{11}.
\begin{example}
Letting $k=2$, ${{A}_{1}}=\left[ \begin{matrix}
   2 & -1  \\
   -1 & 1  \\
\end{matrix} \right]$, ${{A}_{2}}=\left[ \begin{matrix}
   2 & 1  \\
   1 & 2  \\
\end{matrix} \right]$, ${{w}_{1}}={{w}_{2}}={1}/{2}\;$, and $r=2$. A simple calculation shows that
\[\lambda \left( {{\left( \frac{{{A}_{1}}+{{A}_{2}}}{2} \right)}^{2}} \right)\approx \left\{ 4,2.2 \right\},\]
\[\frac{1}{2}\lambda \left( A_{1}^{2}+{{A}_{1}}+I-\sqrt{2\left( A_{1}^{2}+{{A}_{1}} \right)+I}+A_{2}^{2}+{{A}_{2}}+I-\sqrt{2\left( A_{2}^{2}+{{A}_{2}} \right)+I} \right)\approx \left\{ 4.5,2.8 \right\},\]
and
\[\lambda \left( \frac{A_{1}^{2}+A_{2}^{2}}{2} \right)\approx \left\{ 5.1,3.3 \right\}\]
that is, we have
\[\left\{ 4,2.2 \right\}~{{\prec }_{w}}~\left\{ 4.5,2.8 \right\}~{{\prec }_{w}}~\left\{ 5.1,3.3 \right\}.\]
\end{example}

Kosem \cite{kosem} proved that if $k:(0,\infty)\to\mathbb{R}$ is a convex (resp. concave) function with $h\left( 0 \right)=0$, then 
	\[\left\| k\left( A \right)+k\left( B \right) \right\|\le \left( \text{resp}\text{. }\ge  \right)\left\| k\left( A+B \right) \right\|,\] for positive matrices $A,B\in\mathcal{M}_n.$ It turns out that $g-$convex functions satisfy better bounds, as follows.

\begin{theorem}
Let $A,B\in \mathcal{M}_n$ be positive  and let $f$ be a $g$-convex function on the interval $[0,\infty),$ with $\left( g\circ f \right)\left( 0 \right)\le 0$ and $g(0)\ge 0$. Then
	\[\left\| f\left( A \right)+f\left( B \right) \right\|\le \left\| {{g}^{-1}}\left( g\circ f\left( A \right)+g\circ f\left( B \right) \right) \right\|\le \left\| f\left( A+B \right) \right\|.\]
\end{theorem}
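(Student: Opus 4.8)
The plan is to deduce both inequalities directly from Kosem's theorem \cite{kosem}, applied twice: once to the convex function $h:=g\circ f$ and once to the convex function $g^{-1}$. First I would record the structural facts that make Kosem applicable. Since $f$ is $g$-convex, $h=g\circ f$ is convex and $g^{-1}$ is increasing and convex; and since $A,B$ are positive while $f,g$ send $[0,\infty)$ into $[0,\infty)$, every operator in sight is positive. In particular $h(A),h(B)\geq 0$, and by the composition rule of the functional calculus $f(A)=g^{-1}(h(A))$ and $f(B)=g^{-1}(h(B))$. I would next observe that the sign hypotheses, combined with this nonnegativity, force the vanishing that Kosem requires: since $f(0)\geq 0$ and $g$ is increasing, $(g\circ f)(0)=g(f(0))\geq g(0)\geq 0$, which together with $(g\circ f)(0)\leq 0$ collapses to $g(0)=0$, $(g\circ f)(0)=0$, and hence $g^{-1}(0)=0$.

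For the left inequality I would apply Kosem's theorem to the convex function $\psi:=g^{-1}$, which satisfies $\psi(0)=0$, evaluated at the positive matrices $P:=h(A)$ and $Q:=h(B)$. This gives $\|\psi(P)+\psi(Q)\|\leq\|\psi(P+Q)\|$, that is $\|f(A)+f(B)\|\leq\|g^{-1}(h(A)+h(B))\|$, which is exactly the left inequality.

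For the right inequality I would apply Kosem's theorem to the convex function $h=g\circ f$ with $h(0)=0$, at the positive matrices $A,B$, obtaining $\|h(A)+h(B)\|\leq\|h(A+B)\|$. To transfer this to $g^{-1}$ of these matrices I would invoke the elementary operator-norm identity $\|g^{-1}(X)\|=g^{-1}(\|X\|)$, valid for positive $X$ because $g^{-1}$ is increasing and nonnegative, so that the top eigenvalue of $X$ is carried to the top eigenvalue of $g^{-1}(X)$. Applying the increasing map $g^{-1}$ to $\|h(A)+h(B)\|\leq\|h(A+B)\|$ and using this identity at both ends then yields $\|g^{-1}(h(A)+h(B))\|=g^{-1}(\|h(A)+h(B)\|)\leq g^{-1}(\|h(A+B)\|)=\|g^{-1}(h(A+B))\|=\|f(A+B)\|$.

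The step I expect to be the main obstacle, and the reason the statement is read for the operator norm, is this last conversion: the identity $\|g^{-1}(X)\|=g^{-1}(\|X\|)$ holds for the operator norm but not for a general unitarily invariant norm, so that while the left inequality survives for every unitarily invariant norm (being a bare application of Kosem), the right inequality genuinely exploits that $g^{-1}$ commutes with passing to the spectral radius. A secondary point requiring care is the reduction to Kosem's hypothesis $k(0)=0$: the plain conditions $(g\circ f)(0)\leq 0$ and $g(0)\geq 0$ are precisely those used in the scalar superadditivity proposition, but at the matrix level strict negativity at the origin can break the norm inequality, so one must use the nonnegativity of $f$ to sharpen them to the equalities $g(0)=(g\circ f)(0)=0$ before Kosem may be quoted.
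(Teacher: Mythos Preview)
Your argument is correct and follows essentially the same strategy as the paper's proof: two applications of Kosem's theorem together with the spectral-norm identity $\|\phi(X)\|=\phi(\|X\|)$ for increasing $\phi$ and positive $X$. The only noteworthy variation is in the left inequality: the paper applies Kosem to the \emph{concave} function $g$ at $f(A),f(B)$ to get $\|g(f(A)+f(B))\|\le\|h(A)+h(B)\|$ and then inverts via the spectral-norm identity, whereas you apply Kosem directly to the \emph{convex} function $g^{-1}$ at $h(A),h(B)$. Your route is slightly cleaner, and---as you correctly observe---it shows that the left inequality actually holds for every unitarily invariant norm, while the right inequality genuinely requires the operator norm. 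You are also more careful than the paper in reducing the hypotheses $(g\circ f)(0)\le 0$, $g(0)\ge 0$ to the equalities $h(0)=g^{-1}(0)=0$ needed to invoke Kosem as stated.
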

\begin{proof}
If $f$ is a $g$-convex, we get
	\[\left\| g\circ f\left( A \right)+g\circ f\left( B \right) \right\|\le \left\| g\circ f\left( A+B \right) \right\|.\]
Since $g$ is increasing and concave, we infer that
	\[\left\| g\left( f\left( A \right)+f\left( B \right) \right) \right\|\le \left\| g\circ f\left( A \right)+g\circ f\left( B \right) \right\|\le \left\| g\circ f\left( A+B \right) \right\|.\]
Now, applying ${{g}^{-1}}$, to get
	\[\left\| f\left( A \right)+f\left( B \right) \right\|\le \left\| {{g}^{-1}}\left( g\circ f\left( A \right)+g\circ f\left( B \right) \right) \right\|\le \left\| f\left( A+B \right) \right\|.\]
	This completes the proof of the theorem.
\end{proof}

Related to the index of convexity, we have the following result. The proof is an immediate consequence of Proposition \ref{prop_log_conv_ind}, noting that the function $t\mapsto \|A^{t}XB^{1-t}\|$ is $\log-$convex \cite{saboam}.
\begin{corollary}
Let $A,B\in\mathcal{M}_n$ be positive definite matrices and let $X\in\mathcal{M}_n$. If $\|\cdot\|$ is a unitarily invariant norm on $\mathcal{M}_n$, then the function $f:\mathbb{R}\to [0,\infty)$ defined by $$f(t)=\left\|A^{t}XB^{1-t}\right\|$$ has index of convexity $\infty.$
\end{corollary}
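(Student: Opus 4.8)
The plan is to reduce the statement directly to Proposition~\ref{prop_log_conv_ind}. That proposition asserts that any $\log$-convex function on an interval has index of convexity $\infty$, so it suffices to verify the two hypotheses it requires: that $f(t)=\|A^{t}XB^{1-t}\|$ is $\log$-convex, and that it maps into $(0,\infty)$ so that the convexity index is well defined. The first point is precisely the content of the cited fact \cite{saboam}, namely that for positive definite $A,B$ and any unitarily invariant norm, the map $t\mapsto\|A^{t}XB^{1-t}\|$ is $\log$-convex on $\mathbb{R}$ (this is the classical Heinz--type interpolation/log-convexity result for norms). The second point needs a brief remark: since $A$ and $B$ are assumed \emph{positive definite}, they are invertible, so $A^{t}$ and $B^{1-t}$ are invertible for every $t$, and hence $A^{t}XB^{1-t}$ is the zero matrix only if $X=0$; thus as long as $X\neq 0$ the quantity $f(t)$ is strictly positive and $f:\mathbb{R}\to(0,\infty)$ as stated.

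Concretely, I would write: assume $X\neq 0$ (otherwise the statement is vacuous, as $f\equiv 0$ is not $(0,\infty)$-valued). By \cite{saboam}, $f$ is $\log$-convex on $\mathbb{R}$. In particular $f$ is convex and positive, so Proposition~\ref{prop_log_conv_ind} applies verbatim with $J=\mathbb{R}$, yielding $I_{conv}(f)=\infty$. That is the entire argument; the corollary is genuinely an immediate specialization once the $\log$-convexity is granted.

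There is no real obstacle in the deductive chain itself — the only thing that requires care is making sure the quoted external result is invoked in the correct form and that the positivity of $f$ is justified rather than assumed silently. The most delicate bookkeeping point, and the one I would flag, is the invertibility argument guaranteeing $f(t)>0$: it uses positive \emph{definiteness} (not mere positivity) of $A$ and $B$, which is exactly why the hypothesis was stated that way; without invertibility one could have $A^{t}XB^{1-t}=0$ for some nonzero $X$, breaking the domain requirement $f:\mathbb{R}\to(0,\infty)$ of the index of convexity. So the single substantive line beyond citing Proposition~\ref{prop_log_conv_ind} is the observation that positive definiteness forces $f(t)>0$ for all $t$ whenever $X\neq 0$.
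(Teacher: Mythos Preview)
Your proposal is correct and follows exactly the paper's own approach: invoke the $\log$-convexity of $t\mapsto\|A^{t}XB^{1-t}\|$ from \cite{saboam} and then apply Proposition~\ref{prop_log_conv_ind}. Your additional remark that positive definiteness of $A,B$ (together with $X\neq 0$) guarantees $f(t)>0$ is a welcome clarification that the paper leaves implicit.
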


\section{Some applications to entropies}
In this section, we give a new lower bound of quantum relative entropy as an application in this topic.
In quantum information theory \cite{NC2000, OP2004},  the quantum entropy (von Neumann entropy) \cite{Neu}
defined by $S(\rho):=-Tr[\rho\log \rho]$ for a density operator $\rho$, is an important quantity. A density operator is a self adjoint positive operator with unit trace.
The quantum relative entropy \cite{Umegaki} is also important quantity and it is defined by
$$
D(\rho|\sigma):=Tr[\rho(\log \rho-\log \sigma)] 
$$
for two density operators $\rho$ and $\sigma$.  It is known the non-nagativity of quantum relative entropy, $D(\rho|\sigma) \geq 0$. Our lower bound modify this in the following theorem. To show our theorem we give the following lemma.

\begin{lemma}\label{lemma4.1}
\hfill
\begin{itemize}
\item[(i)]
If $f$ is a $g$-convex, then we have
\begin{equation}\label{eq01_lemma4.1}
   f\left( a \right)+f'\left( a \right)\left( b-a \right)\le f\left( a \right)+\left( {{g}^{-1}} \right)'\left( h\left( a \right) \right)\left( h\left( b \right)-h\left( a \right) \right) 
  \le f\left( b \right).  
\end{equation}
\item[(ii)] 
If $f$ is a $g$-concave, then we have
\begin{equation}\label{eq02_lemma4.1}
   f\left( a \right)+f'\left( a \right)\left( b-a \right)\ge f\left( a \right)+\left( {{g}^{-1}} \right)'\left( h\left( a \right) \right)\left( h\left( b \right)-h\left( a \right) \right) 
  \ge f\left( b \right).  
\end{equation}
\end{itemize}
\end{lemma}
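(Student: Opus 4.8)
The plan is to reduce both chains of inequalities to the standard support-line (tangent-line) inequalities for convex functions, applied to the two convex objects that the hypothesis already supplies: the composite $h=g\circ f$ and the increasing convex function $g^{-1}$. Throughout we implicitly use differentiability of $f$ and $g$, which is what makes $f'$ and $(g^{-1})'$ meaningful. The starting observation is the identity $f=g^{-1}\circ h$, valid because $g$ is invertible, which gives by the chain rule the key relation $f'(a)=(g^{-1})'(h(a))\,h'(a)$.

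For the right-hand inequality in \eqref{eq01_lemma4.1}, I would rewrite $f(a)=g^{-1}(h(a))$ and $f(b)=g^{-1}(h(b))$ and recognize the middle quantity as exactly the value at $h(b)$ of the tangent line to $g^{-1}$ at the point $h(a)$. Since $g$ is increasing and concave, $g^{-1}$ is increasing and convex, so its tangent line lies below its graph; evaluating at $h(b)$ yields $g^{-1}(h(a))+(g^{-1})'(h(a))(h(b)-h(a))\le g^{-1}(h(b))=f(b)$, which is precisely the claimed bound.

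For the left-hand inequality I would start from the support-line inequality for the convex function $h$, namely $h'(a)(b-a)\le h(b)-h(a)$, and multiply through by the factor $(g^{-1})'(h(a))$, which is nonnegative because $g^{-1}$ is increasing. Using the chain-rule identity to replace $(g^{-1})'(h(a))h'(a)$ by $f'(a)$ and then adding $f(a)$ to both sides produces $f(a)+f'(a)(b-a)\le f(a)+(g^{-1})'(h(a))(h(b)-h(a))$. Multiplying (rather than dividing) by the factor is the cleaner route, since it needs only nonnegativity and avoids any discussion of strict positivity.

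Part (ii) then follows by reversing every inequality. When $f$ is $g$-concave, $g$ is increasing and convex, so $g^{-1}$ is increasing and concave and $h=g\circ f$ is concave; both support lines now lie above their respective graphs, while the factor $(g^{-1})'(h(a))$ is still nonnegative, so the identical two steps reproduce \eqref{eq02_lemma4.1} with the inequalities flipped. No step is a genuine obstacle; the only point requiring care is keeping the inequality directions consistent once the factor $(g^{-1})'(h(a))$ is introduced, which the multiply-don't-divide formulation handles automatically.
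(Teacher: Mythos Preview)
Your proof is correct. Both inequalities follow from the support-line inequality applied to the convex functions $g^{-1}$ (for the right-hand estimate) and $h$ (for the left-hand estimate), together with the chain-rule identity $f'(a)=(g^{-1})'(h(a))\,h'(a)$, exactly as you describe.

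The paper takes a slightly different route. Rather than invoking the tangent-line inequality directly, it proves the one-parameter family
\[
f\bigl((1-v)a+vb\bigr)\le g^{-1}\bigl(h(a)+v(h(b)-h(a))\bigr)\le f(a)+v\bigl(f(b)-f(a)\bigr),
\]
using convexity of $h$ (and monotonicity of $g^{-1}$) for the first step and convexity of $g^{-1}$ for the second; it then subtracts $f(a)$, divides by $v$, and lets $v\to 0$ to obtain \eqref{eq01_lemma4.1}. Your argument is more economical: by working at the tangent level from the start you skip the limit passage entirely, and the chain-rule substitution is explicit rather than hidden in a difference quotient. The paper's version has the minor advantage that it displays the full $v$-dependent refinement before specialising, but for the lemma as stated your direct approach is the cleaner of the two.
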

\begin{proof}
Since clearly ${{g}^{-1}}$ is increasing convex under the assumptions of lemma, one can check that
\[\begin{aligned}
   f\left( \left( 1-v \right)a+vb \right)&=g^{-1}\circ h\left( a+v\left( b-a \right) \right) \\ 
 & \le {{g}^{-1}}\left( h\left( a \right)+v\left( h\left( b \right)-h\left( a \right) \right) \right) \quad \text{(Convexity of $h$ and $g$ is increasing)}\\ 
 & \le f\left( a \right)+v\left( f\left( b \right)-f\left( a \right) \right)\quad \text{(Convexity of $g$)}.  
\end{aligned}\]
Therefore,
\[\begin{aligned}
   \frac{f\left( a+v\left( b-a \right) \right)-f\left( a \right)}{v}&\le \frac{{{g}^{-1}}\left( h\left( a \right)+v\left( h\left( b \right)-h\left( a \right) \right) \right)-{{g}^{-1}}\left( h\left( a \right) \right)}{v} \\ 
 & \le f\left( b \right)-f\left( a \right)  
\end{aligned}\]
Now, if $v\to 0$, we get \eqref{eq01_lemma4.1}. (ii) can be proven similarly.
\end{proof}

\begin{theorem}
For two density operators $\rho$ and $\sigma$, we have
\begin{equation}\label{comment02_ineq03}
D(\rho|\sigma) \geq S(\sigma)-S(\rho)+Tr[\exp(-\rho\log \rho)\exp(\sigma\log \sigma)-I] \geq 0.
\end{equation}
\end{theorem}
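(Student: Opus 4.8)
The plan is to establish the two inequalities separately after a common algebraic reduction. Writing out the definitions and using $Tr[\rho]=Tr[\sigma]=1$, the term $Tr[\rho\log\rho]$ cancels and one obtains the identity $D(\rho|\sigma)-(S(\sigma)-S(\rho))=Tr[(\sigma-\rho)\log\sigma]$. Setting $B=-\rho\log\rho\ge 0$ and $C=\sigma\log\sigma\le 0$, and writing $T:=Tr\!\left[e^{B}e^{C}\right]$ and $n$ for the dimension, the middle quantity in \eqref{comment02_ineq03} equals $S(\sigma)-S(\rho)+T-n$. Since $Tr[B+C]=S(\rho)-S(\sigma)$, the whole chain is then equivalent to the two scalar bounds $n+Tr[(\sigma-\rho)\log\sigma]\ge T\ge n+Tr[B+C]$.

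The right-hand bound $T\ge n+Tr[B+C]$, which is the nonnegativity part, is the easy one: by the Golden--Thompson inequality $T=Tr\!\left[e^{B}e^{C}\right]\ge Tr\!\left[e^{B+C}\right]$, and since $e^{X}\ge I+X$ for every Hermitian $X$ one has $Tr\!\left[e^{B+C}\right]\ge Tr[I+(B+C)]=n+Tr[B+C]$. This settles $\eqref{comment02_ineq03}\ge 0$ immediately.

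The left-hand bound $T\le n+Tr[(\sigma-\rho)\log\sigma]$ is where the work lies, and I expect the main obstacle to be the noncommutativity of $\rho$ and $\sigma$, since $e^{B}e^{C}$ carries no joint spectral interpretation. The plan is to diagonalise $\sigma=\sum_{j}s_{j}\lvert e_{j}\rangle\langle e_{j}\rvert$; because $e^{C}e_{j}=s_{j}^{s_{j}}e_{j}$ and $e^{B}=\rho^{-\rho}$, this gives $T=\sum_{j}s_{j}^{s_{j}}\langle \rho^{-\rho}e_{j},e_{j}\rangle$. The crucial analytic input is that $p(x):=x^{-x}=e^{-x\log x}$ is \emph{concave} on $(0,1]$; granting this, Jensen's inequality for concave functions (the concave analogue of the Proposition preceding this section, applied to $-p$) yields $\langle \rho^{-\rho}e_{j},e_{j}\rangle\le p(r_{j})=r_{j}^{-r_{j}}$, where $r_{j}:=\langle \rho e_{j},e_{j}\rangle$. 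Hence $T\le\sum_{j}s_{j}^{s_{j}}r_{j}^{-r_{j}}=\sum_{j}e^{\eta(r_{j})-\eta(s_{j})}$, writing $\eta(x)=-x\log x$. It then remains to feed in the scalar inequality $e^{\eta(r)-\eta(s)}\le 1+(r-s)\eta'(s)$ for $r,s\in(0,1]$ and to sum: using $\eta'(s)=-\log s-1$, the constraints $\sum_{j}r_{j}=\sum_{j}s_{j}=1$, and $\sum_{j}r_{j}\log s_{j}=Tr[\rho\log\sigma]$, the linear terms collapse to exactly $Tr[(\sigma-\rho)\log\sigma]$, which is the claim.

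Both the Jensen step and the scalar inequality rest on a single elementary fact, namely $(\log x+1)^{2}\le 1/x$ on $(0,1]$ — equivalently the concavity of $x^{-x}$, and, after the substitution $v=-\log x\ge 0$, the bound $e^{v}\ge(v-1)^{2}$, which follows at once from $e^{v}\ge 1+v$. Indeed, for fixed $s$ the function $G(r)=1+(r-s)\eta'(s)-e^{\eta(r)-\eta(s)}$ satisfies $G(s)=G'(s)=0$ and $G''(r)=-e^{\eta(r)-\eta(s)}\bigl(\eta'(r)^{2}+\eta''(r)\bigr)=-e^{\eta(r)-\eta(s)}\bigl((\log r+1)^{2}-1/r\bigr)\ge 0$, so $G$ is convex with a critical point at $s$ and therefore $G\ge 0$. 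Thus the obstacle is conceptual rather than computational: the decisive idea is that expanding $T$ in the eigenbasis of $\sigma$ and invoking concavity of $x^{-x}$ is precisely what converts the noncommutative trace into a sum of scalar quantities to which the tangent-type estimate applies, giving a bound that refines the classical $D(\rho|\sigma)\ge 0$.
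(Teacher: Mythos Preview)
Your argument is correct, and the core analytic input---concavity of $x\mapsto x^{-x}$ on $(0,1]$, equivalently $(1+\log x)^2\le 1/x$---is the same one the paper uses. The packaging, however, differs in two respects worth noting. First, for the nonnegativity half you invoke Golden--Thompson together with $e^{X}\ge I+X$; the paper avoids this external ingredient entirely by taking spectral decompositions $\rho=\sum_i\lambda_iP_i$, $\sigma=\sum_j\mu_jQ_j$ and applying the scalar bound $\mu_j^{\mu_j}\lambda_i^{-\lambda_i}\ge 1+\eta(\lambda_i)-\eta(\mu_j)$ (which is just $e^{u}\ge 1+u$) pairwise with weights $Tr[P_iQ_j]$. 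Second, for the main inequality you diagonalise only $\sigma$ and use Jensen's inequality $\langle\rho^{-\rho}e_j,e_j\rangle\le r_j^{-r_j}$ to collapse $\rho$ before invoking the tangent estimate; the paper instead keeps the full double decomposition and applies its Lemma on $g$-concave functions (with $f(t)=-t\log t$, $g(t)=e^{t}$) directly to each pair $(\lambda_i,\mu_j)$, so that your scalar inequality $e^{\eta(r)-\eta(s)}\le 1+(r-s)\eta'(s)$ appears there as the specialisation of that lemma. Your route is slightly more modular and separates the two halves cleanly; the paper's route is self-contained, stays within its $g$-convexity framework, and needs no Golden--Thompson.

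One small point: the claim that $e^{v}\ge(v-1)^{2}$ for $v\ge 0$ ``follows at once from $e^{v}\ge 1+v$'' is not quite right, since $1+v\ge(v-1)^{2}$ fails for $v>3$. The inequality itself is true---for instance, $\phi(v)=e^{v}-(v-1)^{2}$ has $\phi(0)=0$ and $\phi'(v)=e^{v}-2(v-1)$, whose minimum on $[0,\infty)$ occurs at $v=\log 2$ with value $4-2\log 2>0$, so $\phi$ is increasing---but you should supply an argument of this kind rather than the one-liner.
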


\begin{proof}
We take a concave function $f(t):=-t\log t$ for $0<t \leq 1$ and an increasing convex function $g(t):=\exp(t)$. Then $h(t):=g\circ f(t)=\exp(-t\log t)=t^{-t}$ is concave on $(0,1]$. Since 
$g\circ f''(t)=t^{-t}(1+\log t)^2-t^{-t-1} \leq 0$ for $t \in (0,1]$. To prove $g\circ f''(t) \leq 0$, it is sufficient to consider the function $k(t):=t(1+\log t)^2$ on $0<t \leq 1$. Then we have $k'(t)=(\log t +1)(\log t+3)$. We also easily find that $k'(t) \geq 0$ for $0<t < e^{-3}$, $k'(t)\leq 0$ for $e^{-3}<t<e^{-1}$ and $k'(t)\geq 0$ for $e^{-1}<t \leq 1$. Since $k(e^{-3})=4e^{-3}\simeq 0.199148 < 1=k(1)$, the function $k(t)$ take a maximum value $1$ when $t=1$ for $0<t\leq 1$. Thus we have $k(t) \leq 1$ so that $t(1+\log t)^2 \leq 1$ which proves $g\circ f''(t) \leq 0$.
Thus we have the following inequalities by Lemma \ref{lemma4.1}(ii)
\begin{equation}\label{comment02_ineq02}
   f(x)-f\left( y \right)-f'\left( y \right)\left( x-y \right)\le f(x)-f\left( y \right)-\left( {{g}^{-1}} \right)'\left( h\left( y \right) \right)\left( h\left( x \right)-h\left( y \right) \right) \leq 0
\end{equation}
We take spectral decompositions $\rho = \sum_i \lambda_i P_i$ and $\sigma = \sum_j \mu_j Q_j$ with $\sum_iP_i =\sum_j Q_j =I$.
Then we have the following inequalities:
\begin{eqnarray*}
-Tr[\rho(\log \rho-\log \sigma)]&=&Tr\left[-\rho\log \rho +\sigma \log \sigma -(-\log \sigma -I)(\rho-\sigma)\right]\\
&=&\sum_{i,j}Tr\left[P_i\left\{-\lambda_i\log \lambda_i+\mu_j\log \mu_j-(-\log\mu_j-1)(\lambda_i-\mu_j)\right\}Q_j\right]\\
&=&\sum_{i,j}\left\{-\lambda_i\log \lambda_i+\mu_j\log \mu_j-(-\log\mu_j-1)(\lambda_i-\mu_j)\right\}Tr[P_iQ_j]\\
&\le &\sum_{i,j}\left\{-\lambda_i\log \lambda_i+\mu_j\log \mu_j-(\mu_j^{\mu_j})(\lambda_i^{-\lambda_i}-\mu_j^{-\mu_j})\right\}Tr[P_iQ_j]\\
&=&\sum_{i,j}\left\{-\lambda_i\log \lambda_i+\mu_j\log \mu_j-\mu_j^{\mu_j} \lambda_i^{-\lambda_i}+1 \right\}Tr[P_iQ_j]\\
&\leq& 0.
\end{eqnarray*}
The inequalities above are due to \eqref{comment02_ineq02}.
Finally we  derive
\begin{eqnarray*}
&&\sum_{i,j}\left\{-\lambda_i\log \lambda_i+\mu_j\log \mu_j-\mu_j^{\mu_j} \lambda_i^{-\lambda_i}+1 \right\}Tr[P_iQ_j]\\
&&=\sum_{i,j}Tr\left[P_i\left\{-\lambda_i\log \lambda_i+\mu_j\log \mu_j-\exp(\mu_j \log \mu_j ) \exp (-\lambda_i\log \lambda_i)+1 \right\}Q_j\right]\\
&&=Tr[-\rho \log \rho +\sigma\log \sigma -\exp(\sigma\log\sigma)\exp(-\rho\log \rho) +I],
\end{eqnarray*}
since we have $\sum_{i,j}Tr[P_if(\lambda_i)g(\mu_j)Q_j]=Tr[\sum_if(\lambda_i)P_i\sum_jg(\mu_j)Q_j]=Tr[f(\rho)g(\sigma)]$.
Thus we have
$$
-Tr[\rho(\log \rho-\log \sigma)] \leq Tr[-\rho \log \rho +\sigma\log \sigma -\exp(-\rho\log \rho)\exp(\sigma\log\sigma) +I] \leq 0,
$$
which implies \eqref{comment02_ineq03}.
\end{proof}

\begin{remark}
The inequalities \eqref{comment02_ineq03} are equivalent to
$$
Tr[(\sigma-\rho)\log \sigma] \geq Tr[\exp(-\rho\log \rho)\exp(\sigma\log \sigma)-I] \geq S(\rho)-S(\sigma).
$$
If we consider the special case $\rho=\sigma$, then both sides in the above inequalities become to $0$, so that equality holds. 
\end{remark}

\begin{remark}
From \eqref{comment02_ineq03}, we have the lower bound of quantum Jeffrey divergence \cite{FYK}:
$$
J(\rho|\sigma):=\frac{1}{2}\left(D(\rho|\sigma)+D(\sigma|\rho)\right)
$$
as
$$
J(\rho|\sigma) \geq \frac{1}{2}\left(Tr[\exp(-\rho\log \rho)\exp(\sigma\log \sigma)+\exp(\rho\log \rho)\exp(-\sigma\log \sigma)-2I]\right).
$$
\end{remark}

The following examples show that the inequalities \eqref{comment02_ineq03} can be strict.
\begin{example}
We take density matrices as
\[\rho : = \frac{1}{7}\left[ {\begin{array}{*{20}{c}}
2&2\\
2&5
\end{array}} \right],\quad\sigma : = \frac{1}{6}\left[ {\begin{array}{*{20}{c}}
3&1\\
1&3
\end{array}} \right].\]
Then we have
$$
D(\rho|\sigma) \simeq 0.14388
$$
and
$$
S(\sigma)-S(\rho)+Tr[\exp(-\rho\log \rho)\exp(\sigma\log \sigma)-I] \simeq 0.0141518.
$$
For the case 
\[\rho : =\frac{1}{6}\left[ {\begin{array}{*{20}{c}}
3&1\\
1&3
\end{array}} \right],\quad\sigma : =  \frac{1}{7}\left[ {\begin{array}{*{20}{c}}
2&2\\
2&5
\end{array}} \right]\]
we also have
$$
D(\rho|\sigma) \simeq 0.174615
$$
and
$$
S(\sigma)-S(\rho)+Tr[\exp(-\rho\log \rho)\exp(\sigma\log \sigma)-I] \simeq 0.0155788.
$$
\end{example}

%

\section*{Acknowledgement}
The author (S.F.) was partially supported by JSPS KAKENHI Grant Number 16K05257.

\vskip 0.3 true cm

{\tiny (M. Sababheh)  Department of Basic Sciences, Princess Sumaya University For Technology, Al Jubaiha, Amman 11941, Jordan.}

{\tiny \textit{E-mail address:} sababheh@psut.edu.jo}

{\tiny \vskip 0.3 true cm }

{\tiny (S. Furuichi) Department of Information Science, College of Humanities and Sciences, Nihon University,
3-25-40, Sakurajyousui, Setagaya-ku, Tokyo, 156-8550, Japan}

{\tiny \textit{E-mail address:} furuichi@chs.nihon-u.ac.jp}

{\tiny \vskip 0.3 true cm }

{\tiny (H. R. Moradi) Department of Mathematics, Payame Noor University (PNU), P.O. Box 19395-4697, Tehran, Iran.}

{\tiny \textit{E-mail address:} hrmoradi@mshdiau.ac.ir }
\end{document}